\newcommand{\R}{\mathbb{R}} 
\newcommand{\C}{\mathbb{C}} 
\newcommand{\N}{\mathbb{N}} 
\newcommand{\D}{\mathbb{D}} 
\newcommand{\T}{\mathbb{T}} 
\newcommand{\sbs}{\subset} 
\newcommand{\eps}{\varepsilon} 
\newcommand{\vphi}{\varphi}
\newcommand{\vth}{\vartheta}
\newcommand{\no}[1]{\Vert #1 \Vert} 
\newcommand{\Ra}{\Rightarrow}
\newcommand{\ra}{\rightarrow}
\renewcommand{\it}[1]{\textit{#1}}
\renewcommand{\bf}[1]{\textbf{#1}}
\renewcommand\subsection{\@startsection{subsection}{2}%
\z@{-.5\linespacing\@plus-.7\linespacing}{.5\linespacing}%
{\normalfont\scshape}}
\renewcommand\subsubsection{\@startsection{subsubsection}{3}%
\z@{.5\linespacing\@plus.7\linespacing}{-.5em}%
{\normalfont\scshape}}
\newtheorem{theorem}{Theorem}[section]
\newtheorem{lemma}[theorem]{Lemma}
\newtheorem{remark}[theorem]{Remark}
\newtheorem{corollary}[theorem]{Corollary}
\newtheorem{proposition}[theorem]{Proposition}
\theoremstyle{definition}
\newtheorem{definition}{Definition}
\newtheorem*{manuallemma}{Lemma 13, \cite{Abakumov2020}}
\title[High degree simple partial fractions]{ High degree simple partial fractions in the Bergman space: Approximation and Optimization}
\author{Nikiforos Biehler}
\date{March 2024}
\begin{document}

\begin{abstract}
We consider the class of standard weighted Bergman spaces $A^2_{\alpha}(\D)$ and the set $SF^N(\T)$ of simple partial fractions of degree $N$ with poles on the unit circle. We prove that under certain conditions, the simple partial fractions of order $N$, with $n$ poles on the unit circle attain minimal norm if and only if the points are equidistributed on the unit circle. We show that this is not the case if the conditions we impose are not met, exhibiting a new interesting phenomenon. We find sharp asymptotics for these norms. Additionally we describe the closure of these fractions in the standard weighted Bergman spaces.
\end{abstract}

\maketitle

\section{Introduction \& main results}

Let $g: [0,1] \ra \R_+$ be a continuous function such that $g(0) = 0$. The weighted Bergman-Hilbert space $A^2_{(g)}(\D) = A^2_{(g)}$, associated to the weight $g$, is defined as the set of all functions $f$, holomorphic on the unit disc $\D = \{ z \in \C \, : \, |z| < 1 \} $, that satisfy: 

\begin{equation*}
\no{f}^2_{(g)} := k_g \int_{\D}|f(z)|^2 g(1-|z|^2) \,dA(z) < + \infty \, ,
\end{equation*}
where $dA(z)$ denotes the normalized Lebesgue measure on the unit disc, and

\begin{equation}
k_g = \bigg(\int_{\D}g(1-|z|^2)\,dA(z)\bigg)^{-1} = \bigg(\int_0^1g(r)dr\bigg)^{-1} \, ,
\end{equation}
is the normalization constant. Our research will be focused exclusively on the standard class of weighted Bergman spaces, here denoted by $A^2_{\alpha}$, corresponding to the weight $g(r) = r^{\alpha}$, for $\alpha > -1$. A \textit{simple partial fraction} (or sometimes just simple fraction) will be a complex function of the form:

\begin{gather*}
   f(z) = \sum_{0\leq k < n} \frac{1}{z-a_k} ,
\end{gather*}
where $a_k \in \C$. These functions can be represented in multiple ways, notably, as logarithmic derivatives of polynomials, or alternatively as the Cauchy transforms of sums of Dirac masses placed at the points $a_0, a_1, \ldots, a_{n-1}$. Another interpretation of the function $f$ is that the value $f(z)$ represents the complex conjugate of the electrostatic field at the point $z$, caused by charges at the points $a_0, a_1, \ldots ,a_{n-1}$, given that forces are inversely proportional to the distance. Simple partial fractions have been extensively during the last 80 years. 

 For a recent survey on partial fractions see \cite{Danchenko2011}. In 1949, G. R. MacLane \cite{MacLane1949} began the study of polynomial approximation under constraints on the locations of their zeros. Specifically, a set $E$ is called a polynomial approximation set relative to a domain $G$ if every zero-free holomorphic function $f$ defined on $G$ can be uniformly approximated on compact subsets of $G$ by polynomials whose zeros are restricted to $E$. Such polynomials will be called $E$-polynomials. MacLane demonstrated that for any bounded, simply connected Jordan domain $\Omega \sbs \C$ with a rectifiable boundary, $\partial \Omega$ serves as a polynomial approximation set relative to $\Omega$. Further advancements were made by M. Thompson \cite{Thompson1967}, C. K. Chui \cite{Chui1969}, and Z. Rubinstein and E. B. Saff \cite{Rubinstein1971} extending MacLane’s results, focusing on bounded polynomial approximation in the unit disc. Moreover, J. Korevaar \cite{Korevaar1964} generalized MacLane’s problem, connecting it to approximation by simple partial fractions. In his paper, Korevaar essentially showed that approximation by polynomials with constraints on their zeros is equivalent to approximation by simple partial fractions with constraints on their poles. He did that using the notion of an \textit{asymptotically neutral} family. A family of finite sequences $\zeta_{n,1}, \zeta_{n,2}, \ldots, \zeta_{n,n}$, $n = n_j \ra \infty$, will be called asymptotically neutral relative to a domain $D$ if :
\begin{gather*}
    \sum_{k=1}^n \frac{1}{\zeta_{n,k} - z} \ra 0 \,\, ,\,\, \text{as}\,\, n \ra \infty,
\end{gather*}
uniformly on all compact sets of $D$. Korevaar's important result is the following:

\begin{theorem}{(Korevaar)}
Let $D$ be a bounded simply connected domain of the complex plane and $\Gamma $ a set disjoint from $D$. The following statements are equivalent:
\begin{enumerate}[label= (\roman*)]
    \item $\Gamma$ is a polynomial approximation set relative to $D$,
    \item For every $\zeta \in \Gamma $, the function $z \mapsto \frac{1}{\zeta - z}$ can be approximated by $\Gamma$-polynomials in $D$,
    \item $\Gamma$ contains an asymptotically neutral family relative to $D$,
    \item $\overline{\Gamma}$ divides the plane, and $D$ belongs to a bounded component of $\C \setminus \overline{\Gamma}$.
\end{enumerate}
\end{theorem}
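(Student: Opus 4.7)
The plan is a cyclic argument (i)$\Rightarrow$(ii)$\Rightarrow$(iii)$\Rightarrow$(iv)$\Rightarrow$(i). The first two steps are essentially formal, (iv)$\Rightarrow$(i) is a Runge-plus-discretization construction, and (iii)$\Rightarrow$(iv) is the crux, drawing on potential theory.

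The implication (i)$\Rightarrow$(ii) is immediate: since $\zeta\in\Gamma$ is disjoint from $D$, the function $z\mapsto 1/(\zeta-z)$ is zero-free and holomorphic on $D$, hence approximable by $\Gamma$-polynomials by assumption. For (ii)$\Rightarrow$(iii), I would take $\Gamma$-polynomials $p_n\to 1/(\zeta-z)$ on compacta of $D$ and form $q_n(z):=(z-\zeta)p_n(z)$; since $\zeta\in\Gamma$ this is still a $\Gamma$-polynomial, and $q_n\to -1$ uniformly on compacta. Consequently $q_n$ is eventually bounded away from $0$ while $q_n'\to 0$ (Weierstrass on uniform convergence of holomorphic sequences), yielding $q_n'/q_n=\sum_k 1/(z-\eta_{n,k})\to 0$ where $\eta_{n,k}\in\Gamma$ are the zeros of $q_n$. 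Since $\deg q_n\to\infty$ along a subsequence (no bounded-degree polynomial sequence converges to the non-polynomial $1/(\zeta-z)$), the family $\{\eta_{n,k}\}$ is asymptotically neutral. For (iv)$\Rightarrow$(i), given a zero-free $f\in\mathrm{Hol}(D)$, write $f=e^g$; by Runge's theorem applied to simply connected $D$, the derivative $g'=f'/f$ is approximable uniformly on compacta by rational functions with poles in $\Gamma$, and the bounded-component condition in (iv) ensures that $\Gamma$ is sufficiently dense near $\partial V$ (the boundary of the bounded component $V\supset D$) to refine these into sums $\sum_k m_{n,k}/(z-\zeta_{n,k})$ with $\zeta_{n,k}\in\Gamma$ and positive integer $m_{n,k}$. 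Integrating and exponentiating then yields $f\approx c_n\prod_k(z-\zeta_{n,k})^{m_{n,k}}$, a scaled $\Gamma$-polynomial.

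The main obstacle is (iii)$\Rightarrow$(iv). Suppose for contradiction that $D$ lies in the unbounded component $U$ of $\C\setminus\overline\Gamma$. Integrating the asymptotically neutral sum along paths in the simply connected $D$ yields $Q_n(z):=P_n(z)/P_n(z_0)\to 1$ uniformly on compacta of $D$, where $P_n(z)=\prod_k(z-\zeta_{n,k})$ and $z_0\in D$ is fixed. Pass to a weak-$\ast$ convergent subsequence of the empirical measures $\mu_n:=(1/n)\sum_k\delta_{\zeta_{n,k}}$, with limit $\mu$ a probability measure supported on $\overline\Gamma$. Off $\mathrm{supp}(\mu)$, the logarithmic potentials $V^{\mu_n}(z)=(1/n)\log|P_n(z)|$ converge to $V^\mu(z)$, and $(1/n)\log|Q_n|\to 0$ on $D$ forces $V^\mu$ to be constant on $D$, hence on the whole component of $\C\setminus\mathrm{supp}(\mu)$ containing $D$ by the identity principle for harmonic functions; this component contains $U$. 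For bounded $\overline\Gamma$, this contradicts the asymptotic $V^\mu(z)\sim\log|z|$ as $|z|\to\infty$; the unbounded-$\Gamma$ case is handled by transferring the argument to the Riemann sphere with $\infty$ adjoined to $\overline\Gamma$. The technical subtleties I expect to address are controlling convergence of log-potentials away from $\mathrm{supp}(\mu)$, tightness questions when $\overline\Gamma$ is unbounded, and, in (iv)$\Rightarrow$(i), arranging for the multiplicities to be positive integers via a sufficiently dense discretization on $\partial V$.
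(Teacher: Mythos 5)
First, note that the paper does not prove this statement at all: Theorem 1.1 is quoted from Korevaar \cite{Korevaar1964} and used as a black box, and the only related argument in the text is the proof of Theorem 1.2, which establishes $(i)\Rightarrow(ii)\Rightarrow(iii)\Rightarrow(iv)$ for the degree-$N$ analogue and then invokes Theorem 1.1 itself to close the cycle. Measured against that, your steps $(i)\Rightarrow(ii)$ and $(ii)\Rightarrow(iii)$ are correct and coincide with the paper's technique (logarithmic derivatives; multiplying $p_n$ by $z-\zeta$ is the same as appending $\zeta$ as an extra zero). Your potential-theoretic $(iii)\Rightarrow(iv)$ is essentially sound \emph{when $\overline{\Gamma}$ is compact} -- weak-$\ast$ convergence of the $\mu_n$ gives locally uniform convergence of the potentials off $\overline{\Gamma}$, forcing $V^{\mu}$ to be constant on the unbounded component and contradicting $V^{\mu}(z)=\log|z|+o(1)$ -- and it is a legitimate alternative to the Vitali/normal-families argument the paper uses for the same implication in Theorem 1.2. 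But the deferred ``Riemann sphere'' fix for unbounded $\Gamma$ cannot be made to work as stated: taking $\zeta_{n,k}=n^2+k$, $1\le k\le n$, one has $\bigl|\sum_{k=1}^{n}1/(\zeta_{n,k}-z)\bigr|\le n/(n^2-R)\to 0$ on $|z|\le R$, so this is an asymptotically neutral family whose closure is a discrete set that does not divide the plane. Thus $(iii)\Rightarrow(iv)$ requires a boundedness (or tightness) hypothesis on the family; the paper's Theorem 1.2 assumes $\Gamma$ bounded, and any complete proof must either add that hypothesis or confine the charges to a compact set.

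The decisive gap is $(iv)\Rightarrow(i)$, which is the substantive content of Korevaar's theorem, and your outline does not prove it. Runge's theorem does not yield approximation of $g'=f'/f$ by rational functions ``with poles in $\Gamma$'': it only lets one push poles to prescribed points of each component of the complement of the compact set, and on a simply connected domain it produces polynomial approximants with no poles at all. More importantly, even granting rational approximants with poles near $\partial V$, the whole difficulty is to make the partial-fraction coefficients \emph{positive integers} (unit point charges at points of $\Gamma$), since only then does integration and exponentiation return a polynomial with zeros in $\Gamma$; with arbitrary complex coefficients the statement is trivial and useless. Korevaar's construction discretizes the equilibrium measure of $\partial V$ by $n$ points of $\Gamma$ with accuracy $o(1/n)$ per point, so that the sum of $n$ unit charges still tends to $0$ on $D$; this quantitative step is the heart of the original paper, and your phrase ``sufficiently dense near $\partial V$ to refine these into sums with positive integer $m_{n,k}$'' assumes it rather than proves it. As written, the cycle of implications is not closed.
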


 For a given set $E \sbs \C$, consider the set of all simple fractions with poles on $E$:

 \begin{gather*}
     SF(E) = \bigg\{ \sum_{0\leq k < n} \frac{1}{z-a_k} \, : \,  n \in \N , a_k \in E \bigg\}.
 \end{gather*}

A corollary of Korevaar's theorem is MacLane's previous result, i.e. that the boundary $\partial D$ of a given bounded domain $D$ is a polynomial approximation set relative to that domain. Another consequence of Korevaar's theorem is the following corollary: If $G$ is a simply connected bounded domain in $\C$ and $K$ is a compact subset of $G$ with connected complement, then $SF(\partial G)$ is dense in $A(K)$, the algebra of continuous functions on $K$ and holomorphic in its interior. Several variations of this result were later proven by P. A. Borodin in \cite{Borodin2016}, \cite{Borodin2014}, \cite{Borodin2021}.

Another problem related to approximation by simple partial fractions was proposed by Chui in the 1970's \cite{Chui1971}, concerning the density of $SF(\T)$ in the classical Bergman space  $A^1 = A^1_0(\D) $. In particular, he conjectures the following bound:

\begin{gather*}
\bigg\Vert \sum_{0\leq k < n} \frac{1}{z-a_k} \bigg\Vert_{A^1} \geq \bigg\Vert \sum_{0 \leq k <n} \frac{1}{z-e^{2 \pi i k/n}}\bigg\Vert_{A^1},
\end{gather*}
for any $n \in \N$, and any points $a_k \in \T$. Let $\Psi_n(z) = \sum_{0 \leq k <n} \frac{1}{z-e^{2 \pi i k/n}}$, be the function corresponding to equidistributed points on the unit circle. Since the norms $\Vert \Psi_n \Vert_{A^1}$ are uniformly bounded from below, a positive answer to the conjecture would also imply that $SF(\T)$ is not dense in $A^1$. The question of density in $A^1$ was quickly answered by D. J. Newman in \cite{Newman1972}, where he showed that:

\begin{gather*}
    \bigg\Vert \sum_{0\leq k < n} \frac{1}{z-a_k} \bigg\Vert_{A^1}  \geq \frac{\pi}{18},
\end{gather*}
for all $n \in \N$ and $a_k \in \T$, thus demonstrating that $SF(\T)$ is not dense $A^1$.

In a recent paper (\cite{Abakumov2020}), E. Abakumov, A. Borichev and K. Fedorovskiy returned to the longstanding conjecture of Chui, and proved an alternative version of it in the context of weighted Bergman spaces of square integrable functions. In particular they proved that for weights $g$ defined on $[0,1]$, which are non-decreasing and concave, with $g(0)=0$, the following inequality holds:

\begin{gather}
\bigg\Vert \sum_{0\leq k < n} \frac{1}{z-a_k} \bigg\Vert_{A^2_{(g)}} \geq \bigg\Vert \sum_{0 \leq k <n} \frac{1}{z-e^{2 \pi i k/n}}\bigg\Vert_{A^2_{(g)}},
\end{gather}
for any $n \in \N$, and any points $a_k \in \T$. This class of weights includes the Bergman spaces $A^2_{\alpha}$, for $0 < \alpha \leq 1$. The paper also provides asymptotic results for these norms, explores the density of simple partial fractions in various spaces, and gives a refined version of Thompson’s theorem. Regarding the density of simple partial fractions in the classical weighted Bergman spaces, they obtain the following dichotomy:

\begin{gather*}
   \text{clos}_{A^2_{\alpha}}(SF(\T)) =
   \begin{cases}
SF(\T)  \,\, ,& \text{if} \,\,\,\, 0 < \alpha \leq 1, \\
A^2_{\alpha}  \,\, ,& \text{if} \,\,\,\, \alpha > 1. 
\end{cases}
\end{gather*}

The focus of this paper will be shifted to a similar class of functions, namely \textit{simple partial fractions of degree $N$}, that is, functions of the form:

\begin{gather*}
   f(z) = \sum_{0 \leq k < n} \frac{1}{(z-a_k)^N}, \, a_k \in \C,
\end{gather*}
for $N > 1$. They are derivatives of the simple partial fractions described above. 

Derivatives of simple partial fractions have been recently studied by N. A. Dyuzhina in \cite{Dyuzhina2021}, in the context of Hardy spaces on the half-plane $\Pi_+$. Results from \cite{Danchenko1995} imply that simple fractions with poles on the lower half-plane are not dense in the space $H^p(\Pi_+)$. Dyuzhina proved that if one instead considers derivatives of simple partial fractions with poles on the lower half-plane, then a density theorem holds for all $1 <p<\infty$ for the Hardy space $H^p(\Pi_+)$. Furthermore she shows the density of the same set on the subspace of $H^p(\Pi_+)$ consisting of functions which can be extended continuously to the entire real line.

As a first result, we establish the connection between Korevaar's theorem (Theorem 1.1) and high degree simple fractions, and prove that simple partial fractions (of degree 1) are not particularly special in the context of Korevaar's theorem. Specifically, his result can be rephrased in terms of simple partial fractions of degree $N$, for any $N\geq 1$.

\begin{theorem}
     Let $D \sbs \C$ be a bounded simply connected domain,  $\Gamma$ a bounded disjoint set from $D$, and $N \in \N$. The following are equivalent:
\begin{enumerate}[label= (\roman*)]
    \item $\Gamma$ is a polynomial approximation set relative to $D$,
    \item For every $\zeta \in \Gamma$, the function $z \mapsto \frac{1}{(\zeta -z)^N}$ can be approximated by $\Gamma$-polynomials in $D$,
    \item $\Gamma$ contains an N-asymptotically neutral family relative to $D$,
    \item $\overline{\Gamma}$ divides the plane and $D$ lies in a bounded connected component of $\C \backslash \overline{\Gamma}$.
\end{enumerate}     
\end{theorem}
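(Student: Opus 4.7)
The strategy is to close the cycle $(\mathrm{iv})\Ra(\mathrm{i})\Ra(\mathrm{ii})\Ra(\mathrm{iii})\Ra(\mathrm{iv})$. Since conditions $(\mathrm{i})$ and $(\mathrm{iv})$ do not involve $N$, the arrow $(\mathrm{iv})\Ra(\mathrm{i})$ is inherited verbatim from Korevaar's theorem (Theorem 1.1 with $N=1$), so the task is to link the degree-$N$ conditions $(\mathrm{ii})$ and $(\mathrm{iii})$ to the others. The step $(\mathrm{i})\Ra(\mathrm{ii})$ is immediate: for $\zeta\in\Gamma\sbs\C\setminus D$, the function $z\mapsto(\zeta-z)^{-N}$ is holomorphic and zero-free on $D$, so the polynomial-approximation-set property applied to it yields approximation by $\Gamma$-polynomials on compacts of $D$.

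For $(\mathrm{ii})\Ra(\mathrm{iii})$ I would take the $N$-th logarithmic derivative. If $\Gamma$-polynomials $p_n$ satisfy $p_n\ra F:=(\zeta-z)^{-N}$ uniformly on compacts of $D$, then on any compact $K\sbs D$ the function $F$ is zero-free, $p_n$ is eventually nonzero on $K$, and all derivatives of $\log p_n$ converge to the corresponding derivatives of $\log F$ on $K$. The meromorphic identity
\begin{equation*}
(\log p_n)^{(N)}(z)=(-1)^{N-1}(N-1)!\sum_j (z-\gamma_{n,j})^{-N},\qquad \gamma_{n,j}\in\Gamma,
\end{equation*}
combined with $(\log F)^{(N)}(z)=N(N-1)!/(\zeta-z)^N$ and the conversion $(z-\gamma)^{-N}=(-1)^N(\gamma-z)^{-N}$ yields
\begin{equation*}
\sum_j (\gamma_{n,j}-z)^{-N}\ra -\frac{N}{(\zeta-z)^N}
\end{equation*}
uniformly on compacts of $D$. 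Adjoining $N$ copies of $\zeta\in\Gamma$ to the sequence $(\gamma_{n,j})_j$ then produces the required $N$-asymptotically neutral family in $\Gamma$.

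The hard part is $(\mathrm{iii})\Ra(\mathrm{iv})$. I argue by contradiction: assume that $D$ is not contained in a bounded component of $\C\setminus\overline{\Gamma}$, so $D$ lies in the unique unbounded component $U_\infty$ (this uses boundedness of $\Gamma$). Set $f_n(z)=\sum_k(\zeta_{n,k}-z)^{-N}$ and consider the normalization
\begin{equation*}
\tilde f_n(z)=\frac{(-1)^N}{n}\,z^N f_n(z).
\end{equation*}
The crude bound $|f_n(z)|\le n/\mathrm{dist}(z,\overline{\Gamma})^N$ gives $|\tilde f_n(z)|\le (|z|/\mathrm{dist}(z,\overline{\Gamma}))^N$, a bound independent of $n$ that is locally bounded on $U_\infty$; hence $\{\tilde f_n\}$ is a normal family on $U_\infty$. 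Any subsequential limit $\tilde f$ is holomorphic on $U_\infty$ and vanishes on $D$ (since $z^N/n$ is bounded on $D$ and $f_n\ra 0$ there), so the identity principle on the connected set $U_\infty$ forces $\tilde f\equiv 0$. On the other hand, writing $R_\Gamma=\max_{\zeta\in\overline{\Gamma}}|\zeta|$, the expansion at infinity on $\{|z|>R_\Gamma\}$ reads
\begin{equation*}
\tilde f_n(z)=1+\sum_{j\ge 1}\binom{N+j-1}{j}\frac{s_{j,n}}{n}\,z^{-j},\qquad s_{j,n}=\sum_k \zeta_{n,k}^j,
\end{equation*}
with $|s_{j,n}/n|\le R_\Gamma^j$ uniformly in $n$. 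A diagonal extraction renders all moments $s_{j,n_l}/n_l\ra m_j$ convergent, identifying $\tilde f$ on $\{|z|>R_\Gamma\}$ with $1+\sum_j\binom{N+j-1}{j}m_j z^{-j}$, whose limit as $|z|\to\infty$ is $1$; this contradicts $\tilde f\equiv 0$. The crux is finding a normalization of $f_n$ that is simultaneously locally bounded on $U_\infty$ (to give normality) and retains a nonzero constant term at infinity (so that the identity principle conflicts with the asymptotics); dividing by $n$ and multiplying by $z^N$ does both. With the contradiction in hand, $D$ must lie in a bounded component of $\C\setminus\overline{\Gamma}$, which is $(\mathrm{iv})$.
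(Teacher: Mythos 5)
Your proof is correct and follows essentially the same route as the paper: the cycle is closed through Korevaar's theorem for the $N$-independent conditions, $(\mathrm{ii})\Rightarrow(\mathrm{iii})$ is the same logarithmic-derivative-plus-$N$-copies-of-$\zeta$ argument, and $(\mathrm{iii})\Rightarrow(\mathrm{iv})$ is the same Korevaar-style normal-family argument on the unbounded component. Your normalization $\tilde f_n = (-1)^N z^N f_n/n$, which pins the value at infinity to $1$, is just a cleaner way of phrasing the paper's observation that the functions $f_n/N_n$ stay bounded away from zero far from $\overline{\Gamma}$.
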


Here, \textit{$N$-asymptotically neutral}, means replacing simple partial fractions, by simple partial fractions of degree $N$ in the definition of asymptotically neutral. As a corollary we get:

\begin{corollary}
 For any simply connected region $D \sbs \C$, the set $SF^N(\partial D)$ is dense in $Hol(D)$, the set of holomorphic functions in $D$, with the topology of uniform convergence on compact subsets of $D$.
\end{corollary}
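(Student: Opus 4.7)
The plan is to deduce the corollary from the polynomial approximation property (equivalently, MacLane's theorem, or the implication (iv) $\Rightarrow$ (i) of Theorem~1.2) via logarithmic differentiation, exploiting the elementary identity
\begin{equation*}
\frac{d^N}{dz^N}\log(z-a) \;=\; \frac{(-1)^{N-1}(N-1)!}{(z-a)^N}.
\end{equation*}
I would first reduce to the case where $D$ is a bounded simply connected domain; this is the essential case, and for an unbounded simply connected $D\neq\C$ one can approximate on any compact $K\subset D$ by exhausting $D$ from within by bounded simply connected subdomains whose boundaries approach $\partial D$. For bounded $D$, condition (iv) of Theorem~1.2 holds with $\Gamma=\partial D$, so $\partial D$ is a polynomial approximation set relative to $D$: every zero-free $f\in Hol(D)$ is approximated, uniformly on compact subsets of $D$, by polynomials whose zeros lie in $\partial D$.

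Given $h\in Hol(D)$, the key step is to produce a zero-free $f\in Hol(D)$ whose $N$-th logarithmic derivative equals $(-1)^{N-1}(N-1)!\,h$. Since $D$ is simply connected, every element of $Hol(D)$ admits an $N$-fold primitive, so I pick $G\in Hol(D)$ with $G^{(N)}=(-1)^{N-1}(N-1)!\,h$ and set $f:=e^{G}$. Then $f$ is holomorphic and nowhere zero on $D$, and $(\log f)^{(N)}=G^{(N)}$ globally on $D$.

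Applying the polynomial approximation property to $f$ yields $\partial D$-polynomials $P_n\to f$ uniformly on compacts of $D$. Zero-freeness of $f$ on a compact $K\subset D$ forces $P_n$ to be zero-free on $K$ for all $n$ large, and Cauchy's estimates applied on a slightly larger compact give uniform convergence $P_n^{(k)}\to f^{(k)}$ on $K$ for every $k$; combined with the rational expressions for $(\log P_n)^{(k)}$ in terms of $P_n,\ldots,P_n^{(k)}$, this yields $(\log P_n)^{(N)}\to G^{(N)}$ uniformly on $K$. Writing $P_n(z)=c_n\prod_k(z-a_{k,n})$ with $a_{k,n}\in\partial D$ (zeros listed with multiplicity), the identity above gives
\begin{equation*}
F_n(z):=\sum_k\frac{1}{(z-a_{k,n})^N} \;=\; \frac{(\log P_n)^{(N)}(z)}{(-1)^{N-1}(N-1)!} \;\longrightarrow\; \frac{G^{(N)}(z)}{(-1)^{N-1}(N-1)!} \;=\; h(z),
\end{equation*}
uniformly on $K$. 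Since each $F_n\in SF^N(\partial D)$ and $K\subset D$ was arbitrary, density follows.

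The main obstacle is the transfer of $P_n\to f$ to convergence of the $N$-th logarithmic derivatives; simple connectedness of $D$ and zero-freeness of $f$ together rule out any branch-cut issue, and the passage is standard via Cauchy estimates, so once the correct auxiliary function $f=e^G$ is identified, the rest of the argument is essentially bookkeeping of the constant $(-1)^{N-1}(N-1)!$.
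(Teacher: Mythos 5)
Your proof is correct and follows essentially the same route as the paper's: take an $N$-fold primitive $G$ of a constant multiple of $h$, exponentiate to obtain a zero-free function, approximate it by $\partial D$-polynomials via the polynomial approximation property of $\partial D$, and pass to the $N$-th logarithmic derivative. The paper's version is terser --- it omits the Hurwitz/Cauchy-estimate justification for the convergence of the logarithmic derivatives and does not discuss the unbounded case --- but the underlying idea is identical.
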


Next we discuss the problem of placing $n$ points $a_0, a_1, \ldots, a_{n-1}$ on the unit circle to minimize the Bergman norm of the corresponding simple partial fraction of degree $N$. The set $SF^N(\T)$, of simple partial fractions of degree $N$, is contained in the Bergman space $A^2_{\alpha}$ if and only if $\alpha > 2(N-1)$. A natural conjecture would be to have an equivalent result as in (1), namely:

\begin{gather}
    \min_{a_k \in \T} \bigg \Vert \sum_{0 \leq k < n}\frac{1}{(z-a_k)^N} \bigg \Vert_{\alpha} = \no{\Psi_n^N}_{\alpha},
\end{gather}
where $\Psi_n^N(z) = \sum_{0 \leq k <n} \frac{1}{(z-e^{2 \pi i k/n})^N}$. Interestingly, this ceases to be true in general for $N>1$, as will be demonstrated through some analytical as well with some numerical calculations in section 4. It is a new phenomenon, and completely unexpected, that the optimal placement for two points, in the case of $N=2$ and $\alpha = 3$ is not on opposite sides of the diameter of the unit circle, but rather it forms an acute angle which is numerically calculated.

In order to compensate for this irregularity that arises once the degree of the functions is higher than one, we attempt to impose some extra conditions on the moments of the points placed on the unit circle. In particular, we show that if we restrict ourselves to points whose $N^2-2$ first moments are assumed to be equal to zero, then the optimal placement for them is, up to a rotation, a regular $n$-gon. 

\begin{theorem}
    Let $N > 1$ and $\alpha_{\ast} = 2(N-1) + 1$. For each $n\in \N$, consider the set $W_n$ defined as:

    \begin{gather*}
        W_n := \bigg\{ (a_0, a_1, \ldots, a_{n-1}) \in \T^n \, : \, a_i \neq a_j \,\, \text{for} \,\,i \neq j\, , \sum_{0 \leq k < n} a_k^m =0 , m = 1,2,\ldots , N^2-2 \bigg\}.
    \end{gather*}

Then:

\begin{gather*}
\min_{(a_0, a_1, \ldots, a_{n-1}) \in W_n} \bigg \Vert \sum_{0 \leq k < n} \frac{1}{(z-a_k)^N} \bigg \Vert_{\alpha_{\ast}} = \no{\Psi_n^N}_{\alpha_{\ast}}.
\end{gather*}
    
\end{theorem}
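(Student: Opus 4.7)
The first step is a Parseval-type identity for $\no{f}^2_{\alpha_{\ast}}$. Since $|a_k|=1$, for $|z|<1$ each summand of $f$ admits the Taylor expansion
\[
\frac{1}{(z-a_k)^N} = (-1)^N \ba{a_k}^N \sum_{m \geq 0} \binom{m+N-1}{N-1} (\ba{a_k} z)^m.
\]
Summing over $k$, the $m$-th Taylor coefficient of $f$ is $(-1)^N\binom{m+N-1}{N-1}\ba{S_{m+N}}$, where $S_p := \sum_{k} a_k^p$. Using the orthogonality of the monomials $\{z^m\}$ in $A^2_{\alpha_{\ast}}$,
\[
\no{f}^2_{\alpha_{\ast}} = \sum_{p \geq N} c_p |S_p|^2, \qquad c_p := \binom{p-1}{N-1}^2 \no{z^{p-N}}^2_{\alpha_{\ast}}.
\]
For the equidistributed configuration one has $|S_p|^2 = n^2 \ind_{n \mid p}$, whence $\no{\Psi_n^N}^2_{\alpha_{\ast}} = n^2 \sum_{j \geq 1} c_{jn}$ as soon as $n \geq N$.

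The set $W_n$ is engineered precisely so that the constraints $S_p=0$ for $p=1,\dots,N^2-2$ annihilate all those terms of the above series with indices in the range $N \leq p \leq N^2-2$. Hence, for every admissible tuple,
\[
\no{f}^2_{\alpha_{\ast}} = \sum_{p \geq N^2-1} c_p |S_p|^2.
\]
A separate verification via Newton's identities shows that for $n \leq N^2-1$ the conditions force the tuple to coincide, up to a global rotation, with the equidistributed configuration (so the statement is trivial); thus I focus on the genuine range $n \geq N^2$.

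The theorem now reduces to proving the inequality
\[
\sum_{p \geq N^2-1} c_p |S_p|^2 \geq n^2 \sum_{j \geq 1} c_{jn}
\]
for arbitrary points $a_0,\dots,a_{n-1}$ on $\T$. This is the heart of the argument and the step where I expect the main difficulty to lie. My strategy is to invoke Lemma~13 of \cite{Abakumov2020}, which compares weighted sums of the form $\sum \lambda_p |S_p|^2$ to their equidistributed value under suitable monotonicity/positivity hypotheses on $\{\lambda_p\}$. The chief technical task is to verify those hypotheses for the tail sequence $\{c_p\}_{p \geq N^2-1}$; here the precise numerology $\alpha_{\ast} = 2(N-1)+1$ and the cut-off $N^2-2$ for the moment conditions are essential. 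The moment cancellations push the effective sum past the initial segment where the non-concavity of the weight $g(r)=r^{2N-1}$ (for $N>1$) prevents a direct application of the $N=1$ ABF inequality, leaving exactly a tail for which the mechanism of Lemma~13 applies and delivers the required bound, with equality at the equidistributed configuration.
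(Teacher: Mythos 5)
Your reduction is correct and is in fact the same reformulation the paper uses in disguise: via $\sum_{j\neq k}\cos\bigl(p(\vth_j-\vth_k)\bigr)=|S_p|^2-n$, your Parseval identity $\no{f}^2_{\alpha_*}=\sum_{p\ge N}c_p|S_p|^2$ is equivalent to the paper's decomposition into a diagonal term plus the interaction function $\vphi_{\alpha_*,N}(\vth)=\sum_{p\ge N}c_p\cos(p\vth)$, and the observation that the moment constraints annihilate the frequencies $p\le N^2-2$ is exactly the role they play in the paper. The problem is the step you defer: you propose to obtain $\sum_{p\ge N^2-1}c_p|S_p|^2\ge n^2\sum_{j\ge1}c_{jn}$ by "verifying the hypotheses of Lemma 13 for the tail sequence $\{c_p\}_{p\ge N^2-1}$", i.e.\ by showing that the \emph{truncated} interaction function $\vphi_{\mathrm{trunc}}(\vth)=\sum_{p\ge N^2-1}c_p\cos(p\vth)$ is convex on $(0,2\pi)$. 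This is false, and the paper's own Section 5 shows it: if $\vphi_{\mathrm{trunc}}$ were convex, Lemma 13 applied to it would give $\sum_{p\ge N^2-1}c_p|S_p|^2\ge n^2\sum_j c_{jn}=\no{\Psi_n^N}^2$ for \emph{every} configuration of $n\ge N^2-1$ points (no moment constraints needed), and since $\no{f}^2\ge\sum_{p\ge N^2-1}c_p|S_p|^2$ always, equidistribution would be globally optimal. For $N=2$, $\alpha_*=3$, $n=3=N^2-1$ the configuration $(0,\pi/6,-\pi/6)$ has strictly smaller norm than $\Psi_3^2$, so $\vphi_{\mathrm{trunc}}$ cannot be convex. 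The structural reason is that the coefficients of $-\vphi_{\mathrm{trunc}}''$, namely $p^2c_p$, tend to a positive constant rather than to $0$; renormalizing with the distributional identity $\tfrac12+\sum_{p\ge1}\cos(p\vth)=0$ leaves behind a Dirichlet kernel $\tfrac12+\sum_{p=1}^{N^2-2}\cos(p\vth)$, which changes sign.

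What the paper does instead is not to truncate but to \emph{modify}: it writes $\vphi_{\alpha_*,N}''$ as a cosine series with coefficients $a_m\to0$ (positive, decreasing, asymptotically convex but not convex), uses an elementary modification lemma (Proposition 4.3) to replace exactly $a_1,\dots,a_{N^2-2}$ by values $\tilde a_m$ making the sequence convex — the computation $(m+1)a_m-ma_{m+1}\le a_0\iff m\ge N^2-1$ is where the cutoff $N^2-2$ comes from — and then invokes Bari's theorem to conclude that $\vphi_{\alpha_*,N}$ differs from a strictly convex function $\tilde\vphi$ by a trigonometric polynomial of degree $\le N^2-2$. Lemma 13 is applied to $\tilde\vphi$, and the moment constraints are used only to check that the low-degree trigonometric correction contributes the same amount for a $W_n$-configuration as for equidistribution. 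Since convexity is not preserved under subtracting a trigonometric polynomial, your $\vphi_{\mathrm{trunc}}$ and the paper's $\tilde\vphi$ are genuinely different objects, and the entire difficulty of the theorem sits in constructing the latter. A small additional point: for $n\le N^2-2$ the set $W_n$ is empty (Newton's identities force the points to be a rotated $n$-th roots-of-unity configuration, which then violates $S_n=0$), so the "trivial range" is $n=N^2-1$ only, not $n\le N^2-1$.
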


Subsequently we provide some asymptotic estimates for the minimum involved in (2) as well as for the sequence of norms $\no{\Psi_n^N}_{\alpha}$. As per usual $\Gamma$ will be the Gamma function and $\zeta$ the Riemann Zeta function.

\begin{theorem}
    For every $\alpha > 2(N-1)$ we have:

    \begin{gather*}
    \lim_{n \ra \infty} n^{\alpha + 1- 2N  }  \Vert \Psi_n^N \Vert_{\alpha}^2 = \frac{\Gamma(\alpha +2) \zeta(\alpha + 1 - 2(N-1))}{((N-1)!)^2}.
    \end{gather*}
    
\end{theorem}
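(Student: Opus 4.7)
The plan is to compute $\Vert \Psi_n^N\Vert_\alpha^2$ exactly in closed form as a lacunary power series, and then extract the $n\to\infty$ asymptotic term by term.

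First, for $|z|<1$ and $|a|=1$ I would expand
\begin{equation*}
\frac{1}{(z-a)^N}=\frac{(-1)^N}{a^N}\sum_{m\geq 0}\binom{m+N-1}{N-1}\frac{z^m}{a^m}.
\end{equation*}
Setting $a=a_k=e^{2\pi i k/n}$ and summing over $k$, the character sum $\sum_{k=0}^{n-1}e^{-2\pi i k(m+N)/n}$ vanishes except when $n\mid(m+N)$, in which case it equals $n$. Re-indexing via $m=jn-N$ (valid for $n>N$ and $j\geq 1$) then yields the lacunary expression
\begin{equation*}
\Psi_n^N(z)=(-1)^N n\sum_{j=1}^{\infty}\binom{jn-1}{N-1}\,z^{jn-N}.
\end{equation*}

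Second, I would use the orthogonality of the monomials in $A^2_\alpha$ together with the standard identity $\Vert z^m\Vert_\alpha^2=m!\,\Gamma(\alpha+2)/\Gamma(m+\alpha+2)$ (a Beta-integral reduction via polar coordinates, together with the value $k_g=\alpha+1$) to obtain the exact formula
\begin{equation*}
\Vert \Psi_n^N\Vert_\alpha^2=n^2\,\Gamma(\alpha+2)\sum_{j=1}^{\infty}\binom{jn-1}{N-1}^{\!2}\frac{(jn-N)!}{\Gamma(jn-N+\alpha+2)}.
\end{equation*}

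Third, for each fixed $j$, Stirling's formula gives $\binom{jn-1}{N-1}\sim (jn)^{N-1}/(N-1)!$ and $(jn-N)!/\Gamma(jn-N+\alpha+2)\sim (jn)^{-(\alpha+1)}$. Combined with the overall $n^2$ and the rescaling $n^{\alpha+1-2N}$, the powers of $n$ cancel exactly, and the $j$-th term of $n^{\alpha+1-2N}\Vert \Psi_n^N\Vert_\alpha^2$ converges pointwise to $\Gamma(\alpha+2)/[((N-1)!)^2\,j^{\alpha+1-2(N-1)}]$. Summing these limits over $j\geq 1$ recovers the announced constant $\Gamma(\alpha+2)\,\zeta(\alpha+1-2(N-1))/((N-1)!)^2$.

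The main technical obstacle is justifying the interchange of the $n\to\infty$ limit with the sum over $j$. I would handle this by dominated convergence: a quantitative version of Stirling applied to the gamma-ratio $\Gamma(jn-N+\alpha+2)/\Gamma(jn-N+1)$ produces a $j$-uniform majorant of the form $C\,j^{-(\alpha+1-2(N-1))}$ for the rescaled $j$-th term, valid for all $n$ sufficiently large. This majorant is summable precisely because $\alpha+1-2(N-1)>1$, which is equivalent to the standing hypothesis $\alpha>2(N-1)$ — so this is exactly the same summability condition that makes $SF^N(\T)\subset A^2_\alpha$ in the first place — and dominated convergence then seals the limit.
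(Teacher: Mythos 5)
Your proposal is correct and follows essentially the same route as the paper: the same lacunary expansion of $\Psi_n^N$ (you derive it via character sums over roots of unity rather than from the logarithmic derivative of $z^n-1$, which is equivalent), the same monomial-norm identity, and the same term-by-term Stirling asymptotics. Your explicit dominated-convergence majorant for the interchange of the limit with the sum over $j$ is in fact a welcome tightening of a step the paper passes over quickly.
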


Theorem 1.5 reveals that even though the optimal placement of points is not equidistribution in general, it cannot diverge too much from it. In fact, relation (2) is true up to a constant (independent of $n$). We will use the following notation throughout the paper: For positive quantities $A$ and $B$, $A \lesssim B$ will mean that  there exists some positive constant $C$ such that $A \leq C B $. Moreover we will use $A \asymp B$ to mean both $A \lesssim B $ and $B \lesssim A$.

\begin{theorem}
    Let $\alpha > 2(N-1)$. Then:

    \begin{gather*}
        \min_{a_k \in \T} \bigg \Vert \sum_{k=0}^{n-1} \frac{1}{(z-a_k)^N} \bigg \Vert_{\alpha} \asymp \Vert \Psi_n^N \Vert_{\alpha}.
    \end{gather*}
\end{theorem}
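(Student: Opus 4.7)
The upper bound in the statement is immediate since the equidistributed configuration $a_k = e^{2\pi i k/n}$ is admissible. For the lower bound, my plan is to reduce to the degree-one problem treated by (1). Setting $F(z) = \sum_{k=0}^{n-1}(z-a_k)^{-1}$ and $f(z) = \sum_{k=0}^{n-1}(z-a_k)^{-N}$, one has $f = \frac{(-1)^{N-1}}{(N-1)!}F^{(N-1)}$. A term-by-term comparison of Taylor coefficients, using the asymptotic $\no{z^k}_{\beta}^{2} \asymp k^{-\beta-1}$, yields the standard weighted Bergman derivative--norm equivalence
\[
\no{F^{(N-1)}}_{\alpha}^{2} \asymp \no{F - P_{N-2}(F)}_{\alpha-2(N-1)}^{2},
\]
valid whenever $\alpha > 2N-3$, where $P_{N-2}(F)$ denotes the Taylor polynomial of $F$ of degree at most $N-2$ and the implicit constants depend only on $N,\alpha$.

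Assume first $\alpha \in (2(N-1),\,2N-1]$, so that the weight $r^{\alpha-2(N-1)}$ is non-decreasing, concave on $[0,1]$, and vanishes at zero. The Abakumov--Borichev--Fedorovskiy inequality (1) then applies to $F$:
\[
\no{F}_{\alpha-2(N-1)}^{2} \geq \no{G_n}_{\alpha-2(N-1)}^{2},
\]
where $G_n(z) = \sum_{k}(z - e^{2\pi i k/n})^{-1}$. The Taylor coefficients of $G_n$ vanish in degrees $0,1,\ldots,n-2$, so $P_{N-2}(G_n) = 0$ for $n \geq N$, and therefore
\[
\no{G_n}_{\alpha-2(N-1)}^{2} = \no{G_n - P_{N-2}(G_n)}_{\alpha-2(N-1)}^{2} \asymp ((N-1)!)^{2}\,\no{\Psi_n^N}_{\alpha}^{2}.
\]
Using the orthogonal decomposition $\no{F}_{\alpha-2(N-1)}^{2} = \no{F - P_{N-2}(F)}^{2} + \no{P_{N-2}(F)}^{2}$ and combining the above, I obtain
\[
\no{f}_{\alpha}^{2} \geq c_1\, \no{\Psi_n^N}_{\alpha}^{2} - c_2\, \no{P_{N-2}(F)}_{\alpha-2(N-1)}^{2},
\]
for positive constants $c_1, c_2$ depending only on $N,\alpha$.

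The principal obstacle is to absorb the residual polynomial term, which satisfies $\no{P_{N-2}(F)}_{\alpha-2(N-1)}^{2} \lesssim \sum_{m=1}^{N-1}|S_m|^{2}$ with $S_m = \sum_{k}\bar a_k^m$. I plan a case split: if this quantity is at most $\frac{c_1}{2c_2}\no{\Psi_n^N}_{\alpha}^{2}$ the estimate above already yields the conclusion, while if some $|S_{m_0}|$ is comparable to $n$ the configuration exhibits strong $m_0$-fold concentration of the points, and I expect to bound $\no{f}_{\alpha}$ from below directly by restricting the defining integral $\int_{\D}|f|^{2}(1-|z|^{2})^{\alpha}\,dA$ to a neighborhood of a cluster of $a_k$'s where one pole contribution dominates. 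This last step is the technically most delicate part of the argument. For the remaining range $\alpha > 2N-1$, one iterates the derivative reduction --- passing from $F^{(N-1)}$ in $A^{2}_{\alpha}$ to $F^{(N)}$ in $A^{2}_{\alpha-2}$, and so on --- until the effective weight exponent lies in the concave ABF range $(0,1]$ where (1) is directly applicable.
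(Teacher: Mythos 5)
Your upper bound and the derivative--norm equivalence $\no{F^{(N-1)}}_{\alpha}^{2}\asymp\no{F-P_{N-2}(F)}_{\alpha-2(N-1)}^{2}$ are fine, and the application of the Abakumov--Borichev--Fedorovskiy inequality to $F$ when $\alpha-2(N-1)\in(0,1]$ is legitimate. But the argument does not close, for two reasons. First, the ``bad case'' of your case split is mischaracterized. The hypothesis there is only $\sum_{m=1}^{N-1}|S_m|^{2}\gtrsim\no{\Psi_n^N}_{\alpha}^{2}\asymp n^{2N-\alpha-1}$, and since $2N-\alpha-1<1$ on the range you consider, this means merely that some $|S_{m_0}|\gtrsim n^{(2N-\alpha-1)/2}$, a power of $n$ strictly below $n^{1/2}$. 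Generic configurations (e.g.\ $n$ independent uniform points, for which $|S_1|\asymp\sqrt{n}$ typically) fall into this case while exhibiting no clustering at all, so no ``concentration near a pole'' argument can finish it. Worse, the power sums $S_1,\dots,S_{N-1}$ do not appear in $\no{f}_{\alpha}^{2}$ at all: the coefficient expansion of $f$ involves only $S_j$ for $j\ge N$, so largeness of the low-order power sums gives no direct lower bound on $\no{f}_{\alpha}$. Second, the range $\alpha>2N-1$ is not reachable by ``iterating the derivative reduction'': taking further derivatives raises the degree of the fraction and moves you away from the degree-one setting where (1) applies, while the unique degree-one reduction lands in $A^2_{\alpha-2(N-1)}$ with exponent $>1$, outside the concave range (and indeed (1) fails there).

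The paper avoids all of this with a direct coefficient computation: writing $\no{f}_{\alpha}^{2}=\frac{1}{((N-1)!)^2}\sum_{s\ge N-1}((s)_{N-1})^2|S_{s+1}|^{2}\no{z^{s-N+1}}_{\alpha}^{2}$, truncating at $s\le 2nN-1$ where each weight is $\gtrsim n^{2N-3-\alpha}$, and invoking the Tur\'an-type inequality from \cite{Abakumov2020},
\begin{gather*}
\sum_{s=1}^{M}\bigg|\sum_{k=0}^{n-1}a_k^{s}\bigg|^{2}\ \ge\ \frac{n(M-n+1)}{2},
\end{gather*}
with $M=2nN$. The point is that the total $\ge n^{2}(2N-1)/2$ strictly dominates the discarded low-order block $\sum_{s=1}^{N-1}|S_s|^{2}\le(N-1)n^{2}$, leaving $\sum_{s=N}^{2nN}|S_s|^{2}\ge n^{2}/2$. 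This is exactly the quantitative fact your approach is missing, and it handles all $\alpha>2(N-1)$ uniformly. If you want to salvage your reduction, you would need this same inequality to control the bad case, at which point the detour through (1) is unnecessary.
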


Finally, we study the density of $SF^N(\T)$ in the Bergman space $A^2_{\alpha}$, as a function of the parameter $\alpha$. We obtain the following result:

\begin{theorem}
Let $\alpha_{\ast} = 2(N-1)+1$, and $N \geq 2$. Then the set of partial fractions of degree $N$, $SF^N$ is nowhere dense in $A^2_{\alpha}$ when $2(N-1)< \alpha < \alpha_{\ast}$, and it is simply not dense when $\alpha = \alpha_{\ast}$. It is dense in $A^2_{\alpha}$ when $\alpha > \alpha_{\ast} + 1$.
\end{theorem}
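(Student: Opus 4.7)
I separate by the three $\alpha$-ranges, using throughout the asymptotic $\|\Psi_n^N\|_\alpha^2\asymp n^{2N-\alpha-1}$ of Theorem 1.6 and the two-sided comparison of the minimal $n$-term norm of Theorem 1.7 as the main tools. Set $g_a(z):=(z-a)^{-N}$.

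\textbf{Nowhere dense case, $2(N-1)<\alpha<\alpha_\ast$.} Here the exponent $2N-\alpha-1>0$, so $\|\Psi_n^N\|_\alpha\to\infty$ and, by Theorem 1.7, the minimal $n$-term norm diverges with $n$. Given $R>0$ there is $n_0(R)$ with
\[
SF^N(\T)\cap B(0,R)\subset\bigcup_{n<n_0(R)}SF_n^N(\T),
\]
where $SF_n^N(\T)$ is the continuous image of the compact torus $\T^n$ under $(a_0,\dots,a_{n-1})\mapsto\sum_kg_{a_k}$. Continuity of $a\mapsto g_a$ from $\T$ into $A_\alpha^2$ (valid whenever $\alpha>2(N-1)$, via a split-integral/dominated-convergence argument handling the boundary singularity) makes each $SF_n^N(\T)$ compact. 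A finite union of compact sets in the infinite-dimensional Hilbert space $A_\alpha^2$ is closed with empty interior, hence nowhere dense; the norm lower bound persists under limits, so $\overline{SF^N(\T)}\cap B(0,R)$ is contained in the same finite union. Letting $R\to\infty$ gives nowhere density of $\overline{SF^N(\T)}$.

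\textbf{Not dense at $\alpha=\alpha_\ast$.} Theorem 1.6 yields $\|\Psi_n^N\|_{\alpha_\ast}^2\to\Gamma(\alpha_\ast+2)\zeta(2)/((N-1)!)^2>0$, and Theorem 1.7 turns this into a uniform lower bound $\|f\|_{\alpha_\ast}\ge c_0>0$ for every $f\in SF^N(\T)$. Hence $0\notin\overline{SF^N(\T)}$ and density fails.

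\textbf{Dense case, $\alpha>\alpha_\ast+1=2N$.} The plan has two steps. First, I would show density of the complex linear span of $\{g_a:a\in\T\}$ in $A_\alpha^2$ by a duality argument: expanding $g_a(z)=(-1)^N\sum_{k\ge 0}\binom{N+k-1}{k}\bar a^{N+k}z^k$ (convergent in $A_\alpha^2$ since $\alpha>2(N-1)$) and using orthogonality of monomials, $\inb{f}{g_a}\equiv 0$ on $\T$ forces every Taylor coefficient of $f$ to vanish. Second, I would upgrade this to density of the positive-integer semigroup. The first ingredient is the decomposition $\Psi_n^{N,a}=g_a+h_n$, where $h_n:=\sum_{k=1}^{n-1}g_{a\zeta_k}\in SF^N(\T)$ (with $\zeta_k=e^{2\pi ik/n}$) and $\|\Psi_n^{N,a}\|_\alpha=\|\Psi_n^N\|_\alpha\to 0$; this shows $h_n\to-g_a$, so $\overline{SF^N(\T)}$ is closed under negation and is therefore a closed additive subgroup of $A_\alpha^2$. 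The second ingredient is the square-summability $\sum_n\|\Psi_n^N\|_\alpha^2<\infty$, which is equivalent to $\alpha>\alpha_\ast+1$. Using the explicit Taylor expansion
\[
\Psi_n^{N,\phi}(z)=(-1)^Nn\sum_{l\ge 1}\binom{ln-1}{ln-N}e^{-i\phi ln}z^{ln-N},
\]
one can prescribe the coefficient of the leading high-frequency mode $z^{n-N}$ by forming integer combinations of rotated copies $\Psi_n^{N,\phi_j}$ inside the subgroup, up to a cascading error of $A_\alpha^2$-size $O(\|\Psi_n^N\|_\alpha)$ in the higher modes $z^{ln-N}$, $l\ge 2$. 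Square-summability lets one assemble, mode by mode from low to high, an element of $\overline{SF^N(\T)}$ whose Taylor coefficients match those of any given $f\in A_\alpha^2$ to arbitrary precision, yielding density.

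\textbf{Main obstacle.} The real difficulty is the second step of the dense case: bridging the gap from the dense complex linear span to the closed semigroup $\overline{SF^N(\T)}$. The subgroup property $-g_a\in\overline{SF^N(\T)}$ comes cheaply from $\|\Psi_n^N\|_\alpha\to 0$, a feature already present for $\alpha>\alpha_\ast$; the genuinely hard point is upgrading this closed subgroup to a full real-linear subspace, which requires controlling the cascading errors of the rotated equidistributed gadgets across an infinite sequence of Taylor modes. The square-summability $\sum_n\|\Psi_n^N\|_\alpha^2<\infty$ is exactly the condition that makes this control possible, explaining both the precise threshold $\alpha_\ast+1=2N$ and the silence of the statement on the intermediate range $\alpha_\ast<\alpha\le\alpha_\ast+1$.
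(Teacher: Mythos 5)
Your treatment of the first two regimes is correct and essentially the paper's: the divergence of the minimal $n$-term norm for $2(N-1)<\alpha<\alpha_\ast$ and its uniform positive lower bound at $\alpha=\alpha_\ast$ (both from the asymptotics of $\|\Psi_n^N\|_\alpha$ together with the two-sided comparison with the minimum) are exactly what Proposition 8.1 uses. Your compactness argument --- bounded pieces of $SF^N(\T)$ lie in finitely many continuous images of tori, hence in a finite union of compact sets with empty interior --- actually supplies the ``empty interior'' half of nowhere-density more explicitly than the paper, which only shows that $SF^N$ is closed in this range. Your duality proof that the span of $\{g_a\}$ is dense, via Fourier coefficients in $a\in\T$, is a legitimate alternative to the paper's argument with the function $H(w)=\int_\D \overline{f(z)}(z-w)^{-N}\,dA_{\alpha}(z)$, analytic off $\overline{\D}$ and vanishing on $\T$. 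The subgroup property via $\Psi_{n_m}^N-f_m\to -g_a$ is exactly the paper's Lemma 8.2.

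The gap is in the step you yourself flag as the main obstacle: passing from the closed additive subgroup $\overline{SF^N(\T)}\supset\{g_a:a\in\T\}$ to its real linear span. Your mode-by-mode scheme is a heuristic, not a proof. Each gadget $\Psi_n^{N,\phi}$ pollutes every mode $z^{ln-N}$, $l\ge 2$; to hit a prescribed coefficient at mode $n-N$ you must take an integer combination whose number of terms $m_n$ is dictated by the size of the target coefficient, so the error injected into higher modes is of order $m_n\|\Psi_n^N\|_\alpha$ with $m_n$ unbounded, and the error contributions from different $n$ are not mutually orthogonal (the exponent sets $\{ln-N\}_{l\ge1}$ overlap for different $n$), so square-summability does not by itself control the cascade; no estimate is given showing that the infinite correction process converges in $A^2_\alpha$. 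The paper sidesteps this entirely by invoking Theorem 5 of \cite{Borodin2014}: a closed additive subgroup of a uniformly smooth Banach space (modulus of smoothness $O(\tau^2)$ for a Hilbert space) containing a Lipschitz curve $\iota(a)=g_a$ must contain the real span of that curve. The threshold $\alpha>\alpha_\ast+1=2N$ enters there as the condition $\iota'(a)=-N(z-a)^{-(N+1)}\in A^2_\alpha$, which makes $\iota$ Lipschitz --- not through square-summability of $\|\Psi_n^N\|_\alpha$ (that the two conditions coincide numerically is a coincidence of exponents, not the mechanism). Unless you either prove a quantitative version of your cascading construction or cite such a subgroup-to-subspace theorem, the dense case remains incomplete.
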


The last theorem rely essentially on Borodin's work in \cite{Borodin2014} and the estimates established in Theorem 1.5.

The text is organized as follows. Section 2 is devoted the proof of Theorem 1.2. Sections 3 and 4 are devoted to some preliminary results that will be used in the proofs of the main theorems. In section 5 we show that equidistribution is not optimal for $N>1$. Section 6 contains the proof of Theorem 1.4. Theorems 1.5 and 1.6 are the subject of section 7. The questions concerning density are finally discussed in section 8.


\section{Korevaar's theorem for high order simple partial fractions}

The theorem of Korevaar establishes in precise terms the link between polynomial approximation with constraints on the zeros and approximation by simple partial fractions with constraints on the poles. In this short section we show that the role of simple partial fractions $SF= SF^1$ is not unique here, but may be substituted by $SF^N$. We will say that a system of finite families $\{ \zeta_{n,k} \, : \, 0 \leq k < n \} \sbs E$ is N-asymptotically neutral, relative to $G$ if 

\begin{gather*}
\sum_{0 \leq k < n} \frac{1}{(\zeta_{n,k}-z)^N} \ra 0,
\end{gather*}
on all compact sets of $G$, as $n \ra \infty$. We may now prove Theorem 1.2.

\vspace{2mm}

\textit{Proof of Theorem 1.2:}

By Korevaar's theorem, it suffices to show the implications $(i) \Ra (ii) \Ra (iii) \Ra (iv)$. The implication $(i) \Ra (ii) $ is part of Korevaar's theorem, since $z \mapsto \frac{1}{(\zeta - z)^N}$ is non-vanishing in $D$ for every $\zeta \in \T$. For $(ii) \Ra (iii) $, fix some $\zeta \in \T$. Since $\frac{1}{(\zeta - z)^N}$ can be approximated by $\Gamma$-polynomials, if we consider such a sequence of polynomials $p_n$ converging to $\frac{1}{(\zeta - z)^N}$, then the degree $\deg p_n$ must tend to infinity, otherwise an appropriate subsequence will converge to a polynomial. Taking logarithmic derivatives we obtain:

\begin{gather*}
    \sum_{0 \leq k < N_n} \frac{1}{\zeta_{N_n,k}-z} \ra -\frac{N}{\zeta - z},
\end{gather*}
where $\zeta_{N_n,k}$, $0 \leq k < N_n$ are the zeros of the polynomials $p_n$, and convergence is uniform on every compact subset of $D$. Differentiating $N-1$ times yields:

\begin{gather*}
    \sum_{0 \leq k < N_n} \frac{(-1)^{N-1}(N-1)!}{(\zeta_{N_n,k}-z)^N} \ra -\frac{(-1)^{N-1}N!}{(\zeta - z)^N}.
\end{gather*}

By simplifying and rearranging terms we get that the family $\zeta_{N_n,k}, \zeta, \ldots, \zeta$, $0 \leq k < N_n$ forms and $N$-asymptotically neutral family in $\Gamma$ relative to $D$. It remains to show implication $(iii) \Rightarrow (iv)$. This is essentially Lemma 2.1 of \cite{Korevaar1964}, which we briefly explain for completeness. Let $U$ be the union of the points of the $N$-asymptotically neutral family in $\Gamma$. Then $\overline{U}$ divides the plane, and $D$ belongs to the bounded connected component of $\C \setminus \overline{U}$. Indeed, let $D'$ be the component of the complement of $\overline{U}$ which contains $D$ and suppose it is unbounded. Consider the functions:

\begin{gather*}
    f_n(z) = \frac{1}{N_n} \sum_{0 \leq k < N_n} \frac{1}{(\zeta_{N_n,k}-z)^N}, \,\, z \in D',
\end{gather*}
where $\zeta_{N_n,k}$, $0 \leq k < N_n$ designates the $N$-asymptotically neutral family in $\Gamma$. The functions $f_n$ are holomorphic and locally uniformly bounded on all compact subsets of $D'$. Moreover, by the hypothesis, $f_n \ra 0$ on all compact subsets of $D$. By Vitali's theorem, an appropriate subsequence of those $f_n$ has to converge to zero on all of $D'$. This is a contradiction, since $D'$ is supposed to be unbounded which means that far away from $\overline{U}$ all of the functions $f_n$ must be bounded below by a positive constant. This concludes the proof.
$\hfill \blacksquare$

An almost immediate consequence is Corollary 1.3.

\vspace{2mm}

\textit{Proof of Corollary 1.3:}

Given $f \in Hol(D)$, it suffices to take a function $F$ such that $F^{(N)}(z) = (N-1)! f(z)$ and consider $G(z) = \exp(F(z))$. Then a sequence $\{p_n\}$ of $\partial D$-polynomials converges to $G$ on compact sets of $D$. By taking the logarithm and differentiating $N$ times we get:

\begin{gather*}
    \sum_{0 \leq k < N_n} \frac{1}{(\zeta_{N_n,k}-z)^N} \ra f(z),
\end{gather*}
on all compact sets of $D$.

$\hfill \blacksquare$ 

\section{Preliminary results}

In this section we present some calculations which will be used further in the text. First, we consider a general weight $g$ as mentioned in the introduction, and derive the condition it must
satisfy in order for the simple partial fractions of degree $N$ to belong to the Bergman space associated to that weight. Next, we consider the minimization problem that we aim to solve and introduce
an auxiliary function which arises naturally in its study. We provide multiple formulas for that function, each of which has its own advantages and disadvantages.

\subsection{Condition on weight}

We will determine the condition the weight $g$ has to satisfy in order for us to have $SF_N \subset A^2_{(g)}$. It
is clear that $SF_N \subset A^2_{(g)}$ if and only if $\frac{1}{(z-1)^N} \in A^2_{(g)}$. We start with a simple lemma:

\begin{lemma}
Let \( p(r) \) be a polynomial such that \( p(1) \neq 0 \). Then for every \( n \in \mathbb{N} \) we have that:
\[
\frac{d^n}{dr^n} \left( \frac{p(r)}{(r - 1)^N} \right) \asymp \frac{1}{(r - 1)^{N+n}} , \quad r \ra 1.
\]
\end{lemma}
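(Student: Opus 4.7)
The plan is to reduce the problem to a clean partial-fraction decomposition and then just read off the leading singularity of the $n$-th derivative.

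First I would use Taylor's formula for the polynomial $p$ at the point $r=1$: write
$$p(r) = \sum_{j=0}^{d} p_j (r-1)^j, \qquad p_0 = p(1) \neq 0,$$
where $d = \deg p$. Dividing by $(r-1)^N$ gives
$$\frac{p(r)}{(r-1)^N} = \sum_{j=0}^{N-1} \frac{p_j}{(r-1)^{N-j}} + Q(r),$$
where $Q$ is a polynomial (the contribution of the terms with $j \geq N$). This reduces the problem to differentiating each summand, which is trivial: for $0 \leq j \leq N-1$,
$$\frac{d^n}{dr^n}\left( \frac{p_j}{(r-1)^{N-j}} \right) = \frac{(-1)^n\, p_j\, (N-j)(N-j+1)\cdots(N-j+n-1)}{(r-1)^{N-j+n}},$$
and $\frac{d^n}{dr^n} Q(r)$ is itself a polynomial, hence bounded near $r=1$.

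Next I would compare the orders of singularity. The term $j=0$ contributes
$$\frac{(-1)^n\, p(1)\, N(N+1)\cdots(N+n-1)}{(r-1)^{N+n}},$$
which is the most singular; every other term behaves at worst like $(r-1)^{-(N+n-1)}$. Since $p(1) \neq 0$ ensures the $j=0$ coefficient is nonzero, multiplying the whole expression by $(r-1)^{N+n}$ and letting $r \to 1$ yields a nonzero finite limit. This gives both $\lesssim$ and $\gtrsim$, i.e.\ the asymptotic $\asymp (r-1)^{-(N+n)}$.

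There is no real obstacle here: the only thing to be careful about is handling separately the part of $p(r)/(r-1)^N$ that is an actual rational function (negative powers of $(r-1)$) and the polynomial remainder, and making sure the lower-order singular terms do not accidentally cancel the leading one. They cannot cancel because each has a strictly different order of pole at $r=1$. Together with $p(1) \neq 0$ guaranteeing that the coefficient of the leading $(r-1)^{-(N+n)}$ is nonzero, this proves the lemma.
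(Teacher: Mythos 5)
Your proof is correct, and it takes a genuinely different route from the paper. The paper argues by induction on $n$: it keeps the expression in the single-quotient form $q(r)/(r-1)^{N+n}$ and checks at each differentiation step that the new numerator polynomial $q'(r)(r-1)-(N+n)q(r)$ is still nonzero at $r=1$ (which follows since its value there is $-(N+n)q(1)$). You instead Taylor-expand $p$ at $r=1$, split $p(r)/(r-1)^N$ into a finite sum of pure negative powers of $(r-1)$ plus a polynomial remainder, and differentiate termwise; the $j=0$ term carries the unique pole of maximal order $N+n$ with coefficient proportional to $p(1)\neq 0$, and no lower-order term can interfere. Your version has the advantage of producing the explicit leading constant $(-1)^n p(1)\,N(N+1)\cdots(N+n-1)$ and of making the non-cancellation argument transparent (distinct pole orders cannot cancel), whereas the paper's induction is shorter to write but only records that the limiting ratio is nonzero. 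Both are complete; your observation that the polynomial part $Q$ and the subleading singular terms must be handled separately but are harmless is exactly the right care to take.
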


\begin{proof}
We proceed by induction on \( n \). For \( n = 1 \) we have:
\[
\frac{d}{dr} \left( \frac{p(r)}{(r - 1)^N} \right) =
\frac{p'(r)(r - 1)^N - p(r)N(r - 1)^{N-1}}{(r - 1)^{2N}}
= \frac{p'(r)(r - 1) - Np(r)}{(r - 1)^{N+1}}.
\]
Since \( p(1) \neq 0 \), the polynomial \( p'(r)(r - 1) - Np(r) \) has the property
\( p'(r)(r - 1) - Np(r) \big|_{r=1} \neq 0 \). We continue in that manner and at the step \( n \) we obtain a polynomial \( q \) such that \( q(1) \neq 0 \) such that
\[
\frac{d^n}{dr^n} \left( \frac{p(r)}{(r - 1)^N} \right) =
\frac{q(r)}{(r - 1)^{N+n}}.
\]
We differentiate once more to get:
\[
\frac{d^{n+1}}{dr^{n+1}} \left( \frac{q(r)}{(r - 1)^{N+n}} \right) =
\frac{q'(r)(r - 1) - (N + n)q(r)}{(r - 1)^{N+n+1}}.
\]
The polynomial on the numerator is non-zero in a neighbourhood around \( r = 1 \), so we get:
\[
\frac{d^{n+1}}{dr^{n+1}} \left( \frac{q(r)}{(r - 1)^{N+n}} \right) \asymp \frac{1}{(r - 1)^{N+n+1}}, \quad r \ra 1.
\]
\end{proof}

We will use the following notation to simplify large formulas. If $x \in \mathbb{R}$ and $n \in \mathbb{N}$ then
$(x)_n = x(x - 1) \cdots (x - n + 1)$ will represent the falling factorial. We write $\langle \cdot , \cdot \rangle_{(g)}$ for the inner product in $A^2_{(g)}$.

\begin{lemma}
Let $N \in \N$ and $\vth, \vphi \in [0,2\pi]$. Then:

\begin{gather*}
\text{Re} \bigg\langle \frac{1}{(z-e^{i \vth})^N},\frac{1}{(z-e^{i \vphi})^N} \bigg \rangle_{(g)} = \frac{k_g}{((N-1)!)^2} \sum_{n=N}^{\infty} c_n(g,N) \cos(n(\vphi - \vth)),
\end{gather*}
where:

\begin{gather}
c_n(g,N) = ((n-1)_{N-1})^2\int_0^1 r^{n-N}g(1-r)dr.
\end{gather}
and $k_g$ is defined in (1).
\end{lemma}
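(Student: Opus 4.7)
The plan is to compute the inner product by expanding both fractions in their Taylor series around $z=0$ and exploiting the orthogonality of monomials $\{z^n\}$ in $A^2_{(g)}$. Both expansions converge on the open disc, and under the assumption that $1/(z-e^{i\zeta})^N \in A^2_{(g)}$ (which is equivalent to the weight condition discussed just above the lemma), the Taylor partial sums converge in the norm of $A^2_{(g)}$, legitimising the termwise computation via Parseval.

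Concretely, I would first use the binomial series
$$\frac{1}{(z-e^{i\vth})^N}=\frac{(-1)^N}{e^{iN\vth}(1-e^{-i\vth}z)^N}=\frac{(-1)^N}{(N-1)!}\sum_{n=0}^{\infty}(n+N-1)_{N-1}\,e^{-i(n+N)\vth}z^n,$$
and the analogous expansion with $\vphi$ in place of $\vth$. Next I would compute the norms of the monomials: switching to polar coordinates $z=re^{i\theta}$ and substituting $s=r^2$, one obtains
$$\intD{z^n\overline{z^m}\,g(1-|z|^2)}=\delta_{nm}\int_0^1 s^n g(1-s)\,ds,$$
so $\no{z^n}^2_{(g)}=k_g\int_0^1 s^n g(1-s)\,ds$.

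Parseval's identity then gives
$$\bigg\langle \frac{1}{(z-e^{i\vth})^N},\frac{1}{(z-e^{i\vphi})^N}\bigg\rangle_{(g)}=\frac{k_g}{((N-1)!)^2}\sum_{n=0}^{\infty}\bigl((n+N-1)_{N-1}\bigr)^2 e^{i(n+N)(\vphi-\vth)}\int_0^1 s^n g(1-s)\,ds.$$
Reindexing by $m=n+N$ converts $(n+N-1)_{N-1}$ into $(m-1)_{N-1}$ and $s^n$ into $s^{m-N}$, so the sum starts at $m=N$ and the general term carries $e^{im(\vphi-\vth)}\int_0^1 s^{m-N}g(1-s)\,ds$. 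Taking real parts turns the exponential into $\cos(m(\vphi-\vth))$, yielding precisely the asserted identity with $c_n(g,N)=((n-1)_{N-1})^2\int_0^1 r^{n-N}g(1-r)\,dr$.

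The only delicate point is the interchange of infinite summation and integration implicit in Parseval's step: because the two functions are singular on $\T$, I cannot justify it by uniform convergence on $\ba{\D}$. Instead I would argue that, once $1/(z-e^{i\vth})^N\in A^2_{(g)}$, its Taylor series converges to it in $A^2_{(g)}$-norm (the monomials are a complete orthogonal system), and then the inner product identity is nothing more than continuity of the inner product. This completeness and $A^2_{(g)}$-membership are guaranteed by the weight condition derived through Lemma 3.1, so no further estimates are needed.
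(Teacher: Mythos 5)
Your proposal is correct and follows essentially the same route as the paper: expand $1/(z-e^{i\vth})^N$ into its Taylor series (the paper obtains the same coefficients by differentiating the geometric series $N-1$ times rather than invoking the binomial series directly), use the radial orthogonality of monomials, and combine termwise. Your extra remark justifying the Parseval step via norm convergence of the Taylor partial sums is a welcome addition that the paper leaves implicit.
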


\begin{proof}
For every $z \in \D$ and $t \in [0,2\pi]$ we have:

\[ 
\frac{1}{z-e^{it}} = -e^{-it} \sum_{n=0}^{\infty}(ze^{-it})^n.
\]

Differentiating the two expressions $N-1$ times with respect to $z$ and rearranging terms we get:

\begin{gather}
\frac{1}{(z-e^{it})^N} = \frac{(-1)^N}{(N-1)!} \sum_{n=N-1}^{\infty}(n)_{N-1}z^{n-N+1}e^{-i(n+1)t} = \frac{(-1)^N}{(N-1)!}\sum_{n=N}^{\infty} (n-1)_{N-1} z^{n-N}e^{-int}.
\end{gather}

Moreover, a standard calculation shows that:

\begin{gather*}
\int_{\D} z^k \overline{z}^l g(1-|z|^2)\,dA(z) = 
\begin{cases}
0, \,\,k \neq l, \\
\int_0^1 r^k g(1-r)\,dr , \,\,k = l.
\end{cases}
\end{gather*}

The result follows by combining the above two formulas.
\end{proof}

\begin{proposition}
We have \( SF_N \subset A^2_{(g)} \) if and only if
\[
\int_{0}^{1} \frac{g(r)}{r^{2N-1}} dr < \infty.
\]
In particular, if \( g(r) = r^\alpha \) then
\[
SF_N \subset A^2_{\alpha} \iff \alpha > 2(N - 1).
\]
\end{proposition}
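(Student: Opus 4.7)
The plan proceeds in three stages. First, by rotation invariance of the weight $g(1-|z|^2)$, $\|(z-a)^{-N}\|_{(g)}$ is independent of $a \in \T$, so (using that elements of $SF_N$ are finite sums) the inclusion $SF_N \subset A^2_{(g)}$ is equivalent to the single-function statement $(z-1)^{-N} \in A^2_{(g)}$. Applying Lemma 3.2 with $\vth = \vphi = 0$ gives
\begin{gather*}
\|(z-1)^{-N}\|^2_{(g)} \;=\; \frac{k_g}{((N-1)!)^2} \sum_{n=N}^{\infty} ((n-1)_{N-1})^2 \int_0^1 r^{n-N} g(1-r)\,dr.
\end{gather*}

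Second, by Fubini (the integrand is non-negative) together with the change of variable $s = 1-r$, I rewrite the series as
\begin{gather*}
\frac{k_g}{((N-1)!)^2} \int_0^1 g(s)\,\Phi(s)\,ds, \qquad \Phi(s) := \sum_{m=0}^{\infty} ((m+N-1)_{N-1})^2 (1-s)^m.
\end{gather*}
The function $\Phi$ is positive and continuous on $(0,1]$ and finite there, so the finiteness of the norm is dictated by the behaviour of $\Phi(s)$ as $s \to 0^+$, and $g$ being continuous with $g(0)=0$ makes $g$ bounded on $[0,1]$ so the tail of the integral at $s=1$ contributes nothing.

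Third, I pin down this behaviour via the standard generating-function identity $\sum_{m \geq 0} \binom{m+k}{k} x^m = (1-x)^{-(k+1)}$. The falling-factorial squared satisfies $((m+N-1)_{N-1})^2 \asymp m^{2(N-1)}$ for large $m$, and for any integer $k \geq 0$ the elementary bracketing $c\, m^k \leq k!\binom{m+k}{k} \leq C\, m^k$ (for $m$ large) yields $\sum_{m \geq 0} m^k (1-s)^m \asymp s^{-(k+1)}$ as $s \to 0^+$. With $k = 2(N-1)$ this gives $\Phi(s) \asymp s^{-(2N-1)}$ near $0$. Consequently $SF_N \subset A^2_{(g)}$ if and only if $\int_0^1 g(s)/s^{2N-1}\,ds < \infty$. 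For the particular case $g(r) = r^\alpha$, the criterion reduces to $\int_0^1 r^{\alpha - 2N + 1}\,dr < \infty$, i.e.\ $\alpha > 2(N-1)$.

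The only delicate point is the two-sided estimate $\Phi(s) \asymp s^{-(2N-1)}$ near $s=0$: the upper bound follows by dominating $((m+N-1)_{N-1})^2$ by a constant multiple of $\binom{m+2N-2}{2N-2}$ and summing the binomial series exactly; the lower bound follows from the reverse comparison valid for $m$ sufficiently large, with the finitely many small-$m$ terms absorbed into the implicit constants.
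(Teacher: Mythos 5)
Your proof is correct, and it shares the paper's overall skeleton while obtaining the key estimate by a different device. Both arguments reduce, via rotation invariance and Lemma 3.2, to deciding when $\int_0^1 K(r)\,g(1-r)\,dr$ is finite, where $K(r)=\sum_{m\ge 0}((m+N-1)_{N-1})^2 r^{m}$, and both hinge on the two-sided bound $K(r)\asymp (1-r)^{-(2N-1)}$ as $r\to 1^-$. The paper gets this by re-summing $K$ in closed form as a constant multiple of $\frac{d^{N-1}}{dr^{N-1}}\bigl(r^{N-1}(r-1)^{-N}\bigr)$ and then invoking its Lemma 3.1 (the inductive statement about derivatives of $p(r)/(r-1)^N$ with $p(1)\neq 0$) to read off the blow-up rate. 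You instead estimate the coefficients directly, bracketing $((m+N-1)_{N-1})^2$ between constant multiples of $\binom{m+2N-2}{2N-2}$ for large $m$ and summing the binomial series $\sum_m\binom{m+k}{k}x^m=(1-x)^{-(k+1)}$ exactly; the finitely many exceptional terms are harmless. Your route is more elementary and self-contained -- it bypasses Lemma 3.1 and the resummation entirely -- at the cost of not producing the closed-form expression for the norm that the paper reuses when deriving the integral representations of the interaction function in Subsection 3.2. The interchange of sum and integral is legitimate by Tonelli since all terms are nonnegative, and your handling of the endpoint $s=1$ (where $\Phi$ is bounded above and below by positive constants, and $g$ is bounded) correctly isolates the singularity at $s=0$ as the only obstruction.
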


\begin{proof}
Using Lemma 3.2 we may write
\[
\left\| \frac{1}{(z - 1)^N} \right\|^2_{(g)} =
\left\langle \frac{1}{(z - 1)^N}, \frac{1}{(z - 1)^N} \right\rangle_{(g)} = \frac{k_g}{((N-1)!)^2} \sum_{n=N}^{\infty}c_n(g,N) .
\]

Using (4) we see that:

\begin{gather*}
    \frac{(-1)^N}{(N-1)!}\sum_{n = N}^{\infty}(n-1)^2_{N-1}r^{n-N} = \frac{d^{N-1}}{dr^{N-1}}\bigg( \frac{r^{N-1}}{(r-1)^N} \bigg).
\end{gather*}

As a result we may re-sum the series:

\begin{gather*}
    \left\| \frac{1}{(z - 1)^N} \right\|^2_{(g)} = \frac{(-1)^N k_g}{(N-1)!} \int_0^1 \frac{d^{N-1}}{dr^{N-1}}\bigg( \frac{r^{N-1}}{(r-1)^N} \bigg) g(1-r) \,dr.
\end{gather*}

The above integral is finite if and only it is finite in a neighbourhood of $r=1$. Hence, according to Lemma 3.1:

\begin{gather*}
    \int_0^1 \frac{d^{N-1}}{dr^{N-1}}\bigg( \frac{r^{N-1}}{(r-1)^N} \bigg) g(1-r) \,dr \asymp \int^1 \frac{g(1-r)}{(r-1)^{2N-1}}\,dr \asymp \int_0 \frac{g(r)}{r^{2N-1}}\,dr.
\end{gather*}

This is the condition for a general weight. If furthermore $g(r) = r^{\alpha}$ we see that this reduces to \( \alpha > 2(N - 1) \), completing the proof.
\end{proof}

\subsection{Interaction function and alternative forms}

Consider a simple partial fraction of degree \( N \),

\[
f(z) = \sum_{k=0}^{n-1} \frac{1}{(z - e^{i\vth_k})^N},
\]
with \( \vth_0 \leq \vth_1 \leq \dots \leq \vth_{n-1} \in [0, 2\pi) \). The problem we are interested in is to try to calculate, for a figen $n \in \N$ the configurations of \( \vth_0 \leq \vth_1 \leq \dots \leq \vth_{n-1}\) for which the following minimum is attained:

\[
\min_{\vth_0 \leq \vth_1 \leq \dots \leq \vth_{n-1} \in [0,2\pi)} \| f \|_{(g)}.
\]

A simple calculation yields:

\begin{gather*}
\| f \|^2_{(g)} =
\left\langle \sum_{k=0}^{n-1} \frac{1}{(z - e^{i\vth_k})^N}, \sum_{k=0}^{n-1} \frac{1}{(z - e^{i\vth_k})^N} \right\rangle_{(g)} \\ = \sum_{k=0}^{n-1} \left\| \frac{1}{(z - e^{i\vth_k})^N} \right\|^2_{(g)}
+ \sum_{k \neq j} \left\langle \frac{1}{(z - e^{i\vth_k})^N}, \frac{1}{(z - e^{i\vth_j})^N} \right\rangle_{(g)} \\
= n \left\| \frac{1}{(z-1)^N}\right\|^2_{(g)} + \sum_{k \neq j} \text{Re}\left\langle \frac{1}{(z - e^{i\vth_k})^N}, \frac{1}{(z - e^{i\vth_j})^N} \right\rangle_{(g)}.
\end{gather*}

In light of Lemma 3.2 we define the \textit{interaction function}:

\begin{gather}
\phi_{(g),N} (\vth) = \frac{k_g}{((N-1)!)^2} \sum_{n=N}^{\infty} c_n(g,N)\cos(n\vth).
\end{gather}

Thus:

\[
\text{Re}\left\langle \frac{1}{(z - e^{i\vth_k})^N}, \frac{1}{(z - e^{i\vth_j})^N} \right\rangle_{(g)} = \phi_{(g),N} (\vth_j - \vth_k),
\]
which permits us to restate the initial minimization problem as the following, equivalent, problem:

\[
\min_{\vth_0 \leq \vth_1 \leq \dots \leq \vth_{n-1} \in [0,2\pi)}
\sum_{k \neq j} \phi_{(g),N} (\vth_j - \vth_k).
\]

To obtain a closed form for the interaction function, we proceed as before. Using the integral representation of the inner product, and developing the series we arrive at:

\begin{gather*}
\phi_{(g),N} (\vth) =
\frac{(-1)^N k_g}{ (N-1)!} \text{Re}\int_{0}^{1} \frac{d^{N-1}}{dr^{N-1}} \left( \frac{r^{N-1}}{(r - e^{-i\vth})^N} \right) g(1 - r) dr.
\end{gather*}

Assuming that the weight satisfies:

\begin{gather}
g^{(k)}(0) = 0,\,\,  k = 0, 1, \dots, N-2,
\end{gather}
we may  integrate by parts \( N-1 \) times and obtain:

\begin{gather*}
\phi_{(g),N} (\vth) =
\frac{(-1)^N k_g}{ (N-1)!} \text{Re}\int_{0}^{1} \frac{r^{N-1}}{(r - e^{-i\vth})^N} g^{(N-1)}(1 - r) dr.
\end{gather*}

A simple computation shows:

\begin{gather}
\text{Re} \left( \frac{(r e^{i\vth})^N}{(r e^{i\vth} - 1)^N} \right)
= \frac{r^{2N}}{(1 + r^2 - 2r \cos\vth)^N} \sum_{k=0}^{N} \binom{N}{k} (-1)^k r^{-k} \cos(k\vth),
\end{gather}
and hence, under condition (7), we have:

\begin{gather}
\phi_{(g),N} (\vth) =
\frac{k_g}{(N-1)!} \int_{0}^{1} \frac{ \sum_{k=0}^{N} \binom{N}{k} (-1)^{N-k} r^{N-k} \cos(k\vth)}{(1 + r^2 - 2r \cos\vth)^N} r^{N-1} g^{(N-1)}(1 - r) dr.
\end{gather}

\begin{lemma}
Let 
\[
U_N(r,\vartheta) = \frac{\sum_{k=0}^{N} \binom{N}{k} (-1)^{N-k} r^{N-k} \cos(k\vartheta)}{(1 + r^2 - 2r \cos(\vartheta))^N}.
\]
Then:
\begin{gather}
\frac{d}{dr} U_N(r,\vartheta) = N \cdot U_{N+1}(r,\vartheta).
\end{gather}
\end{lemma}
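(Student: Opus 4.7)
The plan is to recognize the somewhat forbidding expression for $U_N$ as the real part of a single analytic function of $r$, after which the identity $\frac{d}{dr}U_N = N U_{N+1}$ becomes a one-line differentiation.

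First, I would address the numerator. By the binomial theorem applied with $a=-r$ and $b=e^{i\vartheta}$,
\begin{equation*}
(e^{i\vartheta}-r)^N = \sum_{k=0}^{N}\binom{N}{k}(-r)^{N-k}e^{ik\vartheta} = \sum_{k=0}^{N}\binom{N}{k}(-1)^{N-k}r^{N-k}e^{ik\vartheta},
\end{equation*}
so taking real parts,
\begin{equation*}
\text{Re}\bigl((e^{i\vartheta}-r)^N\bigr) = \sum_{k=0}^{N}\binom{N}{k}(-1)^{N-k}r^{N-k}\cos(k\vartheta),
\end{equation*}
which is precisely the numerator of $U_N$. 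For the denominator, $1+r^2-2r\cos\vartheta = |e^{i\vartheta}-r|^2$, hence $(1+r^2-2r\cos\vartheta)^N = |e^{i\vartheta}-r|^{2N}$.

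Next, I would combine these using $\frac{w^N}{|w|^{2N}} = \frac{1}{\bar w^{\,N}}$ with $w = e^{i\vartheta}-r$, so that $\bar w = e^{-i\vartheta}-r$. This yields the compact representation
\begin{equation*}
U_N(r,\vartheta) = \text{Re}\,\frac{1}{(e^{-i\vartheta}-r)^N}.
\end{equation*}
Now differentiating in $r$ (which commutes with taking real parts since $r$ is real) gives
\begin{equation*}
\frac{d}{dr}U_N(r,\vartheta) = \text{Re}\,\frac{N}{(e^{-i\vartheta}-r)^{N+1}} = N\,U_{N+1}(r,\vartheta),
\end{equation*}
which is the claim.

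There is really no serious obstacle here; the only substantive step is noticing the binomial collapse of the numerator and the modulus-squared structure of the denominator. Once that is done, the differentiation is trivial. I would simply have to be mindful that $e^{-i\vartheta}-r \neq 0$ on the relevant domain $r\in[0,1)$, $\vartheta \in [0,2\pi)\setminus\{0\}$ (and the formula extends by continuity away from the singular points where both $U_N$ and $U_{N+1}$ blow up consistently), so the manipulation of $\frac{1}{(e^{-i\vartheta}-r)^N}$ is justified.
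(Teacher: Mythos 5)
Your proof is correct and follows essentially the same route as the paper: both identify $U_N$ with the real part of an $N$-th power of a rational expression in $r$ and $e^{i\vartheta}$ (the paper's identity $(-r)^N U_N(r,\vartheta)=\mathrm{Re}\bigl((re^{i\vartheta})^N/(re^{i\vartheta}-1)^N\bigr)$ is equivalent to your $U_N=\mathrm{Re}\,(e^{-i\vartheta}-r)^{-N}$) and then differentiate in $r$. Your normalization, which removes the factor $(-r)^N$ before differentiating, makes the final step slightly more transparent than the paper's one-line version.
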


\begin{proof}
By (8) we have that: 

\begin{gather*}
    (-r)^N U_N(r, \vth) = \text{Re}\left( \frac{(r e^{i\vth})^N}{(r e^{i\vth} - 1)^N} \right)
\end{gather*}

Taking the derivative we arrive at (10).
\end{proof}

Using Lemma 3.4 we may do integration by parts in (9) to get:
\begin{gather}
\varphi_{(g),N}(\vartheta) = \frac{k_g (-1)^N}{((N-1)!)^2} \int_0^1 \frac{r - \cos(\vartheta)}{1 + r^2 - 2r \cos(\vartheta)} \left( r^{N-1} g^{(N-1)}(1 - r) \right)^{(N-1)} \, dr
\end{gather}
\[
= \frac{k_g (-1)^{N+1}}{((N-1)!)^2} \int_0^1 \frac{1}{2}\log(1 + r^2 - 2r \cos(\vartheta)) \left( r^{N-1} g^{(N-1)}(1 - r) \right)^{(N)} \, dr.
\]
under the condition that $g^{(k)}(0) = 0$ up to $k = 2(N-1)$, which is true for the weights in the scope of this paper, namely $g(r) = r^\alpha$ with $\alpha > 2(N-1)$. 

\section{Modification of asymptotically convex sequences}

This section is dedicated to the proof of Proposition 4.3, which will be used in the proof of Theorem 1.4. Proposition 4.3 is roughly a precise formulation of the following idea: A sequence that is asymptotically convex (convex after a certain point), can be modified in finitely many places to get a convex sequence. We present a proof of this result since it is tailor-made for the particular sequences appearing later in the text.

\begin{definition}
A sequence $\{a_n\}_{n \geq 0} \sbs \R$ is called convex if $2a_n \leq a_{n-1} + a_{n+1}$ for $n \geq 1$. It will be called convex at $n_0 \geq 1 \in \N$ if $2a_{n_0} \leq a_{n_0-1} + a_{n_0+1}$. When we talk about a strictly convex sequence we mean a convex sequence that satisfies the corresponding strict inequality. We will call the sequence $\{a_n\}_{n \geq 0}$ asymptotically (strictly) convex, if it is (strictly) convex after some index $n_0 \in \N$.
\end{definition}

Writing $\Delta a_n = a_n - a_{n+1}$ and $\Delta^2 a_n = \Delta a_n - \Delta a_{n+1}$ the condition of convexity may be written as $\Delta^2 a_n \geq 0$. A positive convex sequence that is additionally bounded must be decreasing and thus of finite limit, and moreover satisfies $n \Delta a_n \ra 0$ as $n \ra + \infty$. The reason we are discussing convex sequences is because they will be used in conjunction with Bari's Theorem \cite{Bary2014}, Chapter 1, Section 30.

\begin{theorem}{(Bari's Theorem)}
  Let $\{a_n\}_{n\geq 0} \sbs \R$ be a positive, decreasing, and convex sequence, of limit zero. Then:

  \begin{gather*}
      \frac{a_0}{2} + \sum_{n=0}^{\infty} a_n \cos(n \vth) \geq 0 \,\, , \quad \text{for all} \,\, \vth \in (0, 2\pi).
  \end{gather*}
\end{theorem}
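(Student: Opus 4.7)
The plan is to prove the inequality by two successive applications of summation by parts, exploiting the nonnegativity of the Fejér kernel.

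First, I would introduce the Dirichlet kernel $D_n(\vth)=\tfrac{1}{2}+\sum_{k=1}^{n}\cos(k\vth)=\tfrac{\sin((n+1/2)\vth)}{2\sin(\vth/2)}$ and the Fejér kernel $F_n(\vth)=\tfrac{1}{n+1}\sum_{k=0}^{n}D_k(\vth)=\tfrac{\sin^{2}((n+1)\vth/2)}{2(n+1)\sin^{2}(\vth/2)}\geq 0$. The identity $\cos(n\vth)=D_n(\vth)-D_{n-1}(\vth)$ followed by one Abel summation, after absorbing the boundary term $\tfrac{a_0-a_1}{2}=\Delta a_0\cdot D_0(\vth)$, gives
$$\frac{a_0}{2}+\sum_{n=1}^{N}a_n\cos(n\vth)=a_N D_N(\vth)+\sum_{n=0}^{N-1}\Delta a_n\,D_n(\vth),$$
with $\Delta a_n=a_n-a_{n+1}\geq 0$ by monotonicity.

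Second, using the identity $D_n=(n+1)F_n-nF_{n-1}$ (with the convention $F_{-1}:=0$), a second Abel summation on the remaining sum yields
$$\sum_{n=0}^{N-1}\Delta a_n\,D_n(\vth)=N\,\Delta a_{N-1}\,F_{N-1}(\vth)+\sum_{n=0}^{N-2}(n+1)\,\Delta^{2}a_n\,F_n(\vth).$$
Crucially, every term of this last sum is nonnegative: $F_n\geq 0$, and $\Delta^{2}a_n\geq 0$ is precisely the hypothesis of convexity.

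Finally, fixing $\vth\in(0,2\pi)$ and letting $N\to\infty$, the standard bounds $|D_N(\vth)|\leq 1/(2|\sin(\vth/2)|)$ and $F_{N-1}(\vth)\leq 1/(2N\sin^{2}(\vth/2))$, combined with $a_N\to 0$ and $\Delta a_N=a_N-a_{N+1}\to 0$, force both boundary terms $a_N D_N(\vth)$ and $N\Delta a_{N-1}F_{N-1}(\vth)$ to vanish. The remaining nonnegative-term series $\sum_{n\geq 0}(n+1)\Delta^2 a_n\,F_n(\vth)$ is therefore convergent and equals $\tfrac{a_0}{2}+\sum_{n=1}^{\infty}a_n\cos(n\vth)$, which is consequently $\geq 0$.

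\textbf{Main obstacle.} There is no single deep step; the argument is essentially two careful bookkeeping computations. The only points needing a moment's thought are the vanishing of the two boundary terms as $N\to\infty$ (both follow immediately from $a_n\to 0$, the second via the Fejér bound $F_{N-1}\lesssim 1/N$), and the nonnegativity of the Fejér kernel, which is what ultimately drives the inequality.
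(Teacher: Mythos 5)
Your argument is correct and is exactly the classical proof the paper alludes to: two Abel summations transform the partial sums into boundary terms plus $\sum (n+1)\Delta^2 a_n F_n(\vth)$, which is the identity the paper cites (up to the normalization of the Fej\'er kernel), and nonnegativity of $F_n$ together with the vanishing of the boundary terms gives the result. No substantive difference from the paper's route, and the bookkeeping checks out.
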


The proof of the theorem relies on the fact that the series can be rewritten as :

\begin{gather*}
    \frac{a_0}{2} + \sum_{n=0}^{\infty} a_n \cos(n \vth) = \frac{1}{2}\sum_{n=0}^{\infty} (n+1) \Delta^2 a_n F_{n+1}(\vth) \,\, , \quad \text{for all} \,\, \vth \in (0, 2\pi), 
\end{gather*}

where $F_n(\vth) = \frac{1}{n} \bigg( \frac{\sin(n\vth/ 2)}{\sin(\vth /2)}\bigg)^2 \geq 0$ are the (positive) Féjer kernels.

\begin{remark}
    From the alternative writing of the cosine series in Bari's theorem, we may deduce that it is strictly positive whenever the sequence $\{a_n\}_{n\geq 0}$ is strictly convex. If the sequence is instead just convex, and in particular if $\Delta^2 a_0 = 0$ then the series might have zeros. The zeros of each Féjer kernel are countable, and as a result the zeros of the whole series are countable. Therefore a series like that cannot vanish on a set of positive measure, particularly it cannot vanish on any interval.
\end{remark}

\begin{proposition}
     Let $\{a_n\}_{n \geq 0} \sbs \R$ be a positive, decreasing sequence with limit zero. If $\{a_n\}_{n \geq 0}$ is asymptotically convex, then there exists some $N_0 \in \N$ and a sequence $\{\tilde{a}_n\}_{n\geq 0}$ with $\tilde{a}_0 = a_0$ and $\tilde{a}_{N_0+k} = a_{N_0+k}$ for every $k \geq 0$ such that:

    \begin{enumerate}[label = (\arabic*)]
        \item $\tilde{a}_n \geq 0$ for every $n \geq 0$,
        \item $\{\tilde{a}_n\}_{n \geq 0}$ strictly decreasing,
        \item $ \{ \tilde{a}_n \}_{n \geq 0}$ convex.
    \end{enumerate}
\end{proposition}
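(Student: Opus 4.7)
The plan is to perturb only finitely many early terms, replacing $a_1,\dots,a_{N_0-1}$ by a linear interpolation from $a_0$ down to $a_{N_0}$ for a suitable $N_0$ past the convexity index $n_0$ of Definition 4.1; the decisive point is that the constant slope of the interpolating segment must be at least as steep as the first slope $\Delta a_{N_0}$ of the convex tail, so that convexity is not broken at the junction $n=N_0$.

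Quantitatively, since $\{a_n\}$ is convex for $n \geq n_0$, the differences $\Delta a_n = a_n - a_{n+1}$ are nonnegative and non-increasing for $n \geq n_0$; and since $\sum_{k\geq 0} \Delta a_k = a_0 < \infty$, the standard estimate for tails of a convergent series with non-increasing positive terms yields $n \Delta a_n \to 0$. Combined with $a_{N_0} \to 0$ and $a_0 > 0$ (the degenerate case $a_0 = 0$ forces $a_n \equiv 0$ and is trivial), this lets me choose $N_0 \geq n_0$ satisfying
\[
N_0 \, \Delta a_{N_0} \;\leq\; a_0 - a_{N_0}.
\]

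With this $N_0$ in hand, I would set $\tilde a_n = a_n$ for $n \geq N_0$ and
\[
\tilde a_n \;=\; a_0 - n \cdot \frac{a_0 - a_{N_0}}{N_0}, \qquad 0 \leq n \leq N_0,
\]
and verify the three conclusions by direct inspection. Nonnegativity is clear: on the interpolation range $\tilde a_n$ lies between $a_{N_0}\geq 0$ and $a_0$, and on the tail $a_n > 0$. Strict monotonicity on $\{0,\dots,N_0\}$ is built in by the strictly negative slope $-(a_0 - a_{N_0})/N_0$; on the tail it follows from the observation that a positive convex decreasing sequence with limit zero cannot be constant on two consecutive indices without being identically zero thereafter, contradicting positivity. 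Convexity inside the interpolation range is automatic (affine), on the tail it is the hypothesis, and at the only junction index $n = N_0$ the condition $\Delta\tilde a_{N_0-1}\geq\Delta\tilde a_{N_0}$ reduces to $(a_0 - a_{N_0})/N_0 \geq \Delta a_{N_0}$, which is precisely the inequality defining $N_0$.

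The main obstacle is really just the choice of $N_0$, which rests on the quantitative decay $n \Delta a_n \to 0$ --- that is, the slopes along the convex tail shrink strictly faster than $1/n$, so that one linear segment of length $N_0$ has enough room to descend from $a_0$ to $a_{N_0}$ without becoming shallower than $\Delta a_{N_0}$. Once this is granted the construction is essentially a one-line formula and the three properties are checked almost mechanically; I do not expect further difficulties.
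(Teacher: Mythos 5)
Your construction is correct, and it reaches the same key inequality as the paper by a different route. The paper defines $N_0$ by essentially the same condition you use --- its requirement $a_0 \geq a_{N_0} + N_0\,\Delta a_{N_0}$ is exactly your $N_0\,\Delta a_{N_0} \leq a_0 - a_{N_0}$, i.e.\ that the backward affine extension of the first tail segment lands at or below $a_0$ at index $0$ --- but then fills in $\tilde a_{N_0-1}, \tilde a_{N_0-2}, \dots$ by a backward recursion, setting $\tilde a_{N_0-1} = 2a_{N_0} - a_{N_0+1} + \eps$ with a slack parameter $\eps$ constrained so that three invariants (dominating the next term, convexity at the current index, and room left to reach $a_0$ at index $0$) propagate. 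Your single chord from $(0,a_0)$ to $(N_0, a_{N_0})$ replaces that recursion with a closed formula: affinity gives convexity on the interpolation range for free, and the only nontrivial check is the junction at $n=N_0$, which is precisely the defining inequality for $N_0$. This is cleaner and arguably easier to verify; what the paper's recursion buys is flexibility (the free $\eps$ at each step lets one tune the modified coefficients, e.g.\ toward strict convexity on the modified range), which your affine segment forgoes since $\Delta^2\tilde a_n = 0$ there --- but the proposition only asks for convexity, so nothing is lost. Two small points you should make explicit: (i) you need $a_{N_0} < a_0$ strictly so that the chord has strictly negative slope; this holds for $N_0$ large since $a_n \to 0 < a_0$, and should be folded into the choice of $N_0$ alongside $N_0\,\Delta a_{N_0} \leq a_0 - a_{N_0}$; (ii) your argument that the tail is strictly decreasing (a vanishing difference $\Delta a_n = 0$ in the convex range forces $\Delta a_m = 0$ for all $m \geq n$ and hence $a_m \equiv 0$ eventually, contradicting positivity) is correct and is in fact needed for the proposition to make sense at all, since $\tilde a$ agrees with $a$ on the tail.
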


\begin{proof}

We define:

\begin{gather*}
N_0 = \min\{ N \in \N \, : \, a_0 \geq a_N + N\Delta a_N \,\,\text{and}\,\, a_n + n \Delta a_n \,\, \text{decreasing for} \,\, n \geq N \}. 
\end{gather*}

Since $\{a_n\}$ is decreasing with limit zero, and $\{a_n\}$ asymptotically convex, it follows that $a_n + n \Delta a_n$ is eventually decreasing ant of limit zero. This justifies the existence of $N_0$. We will define $\tilde{a}_{N_0-1}$ and the rest of the terms will be defined in the same manner recursively. Consider a real number $\eps$ such that

    \begin{gather*}
 0 \leq \eps \leq \frac{a_0 - ((N_0+1)a_{N_0} - N_0a_{N_0+1})}{N_0},
    \end{gather*}
and set $\tilde{a}_{N_0-1} = 2a_{N_0} - a_{N_0+1}+ \eps$. We verify that 

\begin{enumerate}[label=(\alph*)]
\item  $\tilde{a}_{N_0-1} \geq a_{N_0}$ ,
\item $2a_{N_0} \leq \tilde{a}_{N_0-1} + a_{N_0 +1}$ ,
\item $a_0 \geq N_0 \tilde{a}_{N_0-1} - (N_0-1)a_{N_0}$.
\end{enumerate}

Note that these conditions indeed permit to recursively define the previous term in the same manner. For condition (a), we see that $\tilde{a}_{N_0-1} - a_{N_0} = a_{N_0} - a_{N_0+1} + \eps \geq 0$ since $\eps \geq 0 $ and $a_n$ decreasing. For (b), we have $\tilde{a}_{N_0-1} + a_{N_0 +1} = 2a_{N_0 }+ \eps \geq 2a_{N_0}$. Finally for (c), we write $a_0 - ( N_0 \tilde{a}_{N_0-1} - (N_0-1)a_{N_0})  =a_0 - (N_0 + 1)a_{N_0} + N_0 a_{N_0 +1} - N_0 \eps \geq 0$ according to the condition applied to $\eps$. With $\tilde{a}_{N_0-1}$ defined we may proceed the recursion.
\end{proof}


\section{Counterexamples to equidistribution}

For a given degree $N \in \N$ and a certain number of points $n \in \N$ we consider the optimization problem:

\begin{gather}
\min_{\theta_0 < \theta_1 < \dots < \theta_{n-1} \in [0,2\pi)} \| f \|_{\alpha},
\end{gather}
where:

\[
f(z) = \sum_{k=0}^{n-1} \frac{1}{(z - e^{i \vth_k})^N},
\]
and $\alpha > 2(N-1)$, the parameter corresponding to the weight of the ambient space $A^2_{\alpha}$. We are interested in finding the optimal placement of the numbers $e^{i \vth_k} \in \T$ such that the value in (12) is attained. As already seen in section 3, this problem is essentially the same as minimizing the function:

\begin{gather}
\min_{\vth_0 < \vth_1 < \cdots < \vth_{n-1} \in [0,2\pi)}\sum_{k \neq j}  \vphi_{(r^\alpha),N}(\vth_j - \vth_k),
\end{gather}
where $\vphi_{(r^{\alpha}),N} := \vphi_{\alpha,N}$, is the interaction function as defined in section 3. Note that according to (1) we have that $k_{r^{\alpha}} := k_{\alpha} = \alpha + 1$. We will show that in certain cases, this function is not minimized when the points $e^{i \vth_k} \in \T$ are equidistributed on the unit circle. In other words, the minimum is not attained when $\vth_k = 2k \pi /n$, $k=0,1,\ldots, n-1$.

\subsection{Optimal placement for 2 points}

We consider the case of $N = 2$ and $\alpha = 3$. According to (4), (6) and (11), the interaction function $\vphi_{3,2} = \vphi$ is given by the expressions:

\begin{align*}
  \vphi(\vth) &= 4!\bigg( \sum_{n=1}^{\infty} \frac{n-1}{n(n+1)(n+2)} \cos(n \vth)  \bigg) \\
   & = 4 \int_0^1 \frac{r - \cos(\vth)}{1 + r^2 - 2r\cos(\vth)} (3 - 12r + 9r^2)\,dr.
\end{align*}

We consider $\vth_0 < \vth_1 \in [0,2\pi)$, and minimize the quantity in (5), which in our case is:

\begin{gather*}
    \vphi(\vth_0 - \vth_1) + \vphi(\vth_1 - \vth_0).
\end{gather*}

 The function $\vphi$ is even, and satisfies $\vphi(\pi - \vth) = \vphi(\pi + \vth)$. Moreover the problem is rotation invariant. Therefore we may assume that $\vth_0 = 0$ and $\vth_1 := \vth \in (0,\pi]$. The quantity to minimize thus becomes simply equal to $2\vphi(\vth)$. Without loss of generality, we will try to minimize the function $\vphi(\vth)$ in $[0,\pi]$. First of all we calculate some values for $\vphi$. Using the second expression  we can easily see that:

\begin{gather*}
\vphi(0) = 4 \int_0^1 \frac{r-1}{1 + r^2 - 2r} 3(3r-1)(r-1) \,dr = 12 \int_0^1 (3r-1) \,dr  = 6.
\end{gather*}

Additionally, we may verify using the same expression that:

\begin{gather*}
\vphi(\pi/2) = 4 \int_0^1 \frac{r}{1+r^2}(3 - 12r + 9r^2) \,dr  = -30 + 12 \pi - 12\log(2) \approx -0,618,
\end{gather*}

\begin{gather*}
\vphi(\pi/3) = 4 \int_0^1 \frac{r- 1/2}{1+r^2-r}(3 - 12r + 9r^2) \,dr  = -12 + 2\pi\sqrt{3} \approx -1,117,
\end{gather*}
and 
\begin{gather*}
\vphi(\pi) = 4 \int_0^1 \frac{r+1}{1 + r^2 + 2r} 3(r-1)(3r-1) \,dr = 96\log2 - 66 \approx 0,542.
\end{gather*}

Looking at these values we deduce that this $\vphi$ is not a convex function on $(0,2\pi)$, and hence might not have a single minimum. Moreover $\vphi(\pi/2) < \vphi(\pi)$, so the minimum is not achieved at $\vphi(\pi)$ which would correspond to equidistributed points. The reason for calculating $\vphi(\pi/3)$ will become apparent in section 6. Another application of (11) gives:

\begin{gather*}
    \vphi(\vth) = 12 \int_0^1 \log(1 + r^2 - 2r \cos(\vth)) (2 - 3r)\,dr .
\end{gather*}

Differentiating the above integral gives:

 \begin{gather*}
     \vphi'(\vth) = 24 \sin(\vth) \int_0^1 \frac{2r-3r^2}{1+r^2-2r\cos(\vth)}\,dr.
 \end{gather*}

The above integral can be evaluated as:

\begin{gather*}
\int_0^1 \frac{2r-3r^2}{1+r^2-2r\cos(\vth)}\,dr \\
= -3 + (1 -3 \cos(\vth))\log(2-2\cos(\vth)) + \frac{2\cos(\vth)(1-3\cos(\vth))+3}{\sin(\vth)}\cdot \arctan\bigg(\frac{1 + \cos(\vth)}{\sin(\vth)}\bigg) .
\end{gather*}

And hence the minimum can be obtained as a solution of the following equation in $(0, \pi)$:

\begin{align*}
-3\sin(\vth) +  \sin(\vth)(1 -3 \cos(\vth))\log(2-2\cos(\vth)) \\ +\sin(\vth)(2\cos(\vth)(1-3\cos(\vth))+3)\arctan\bigg(\frac{1 + \cos(\vth)}{\sin(\vth)}\bigg) = 0.
\end{align*}

A standard numerical approximation using Brent’s method with hyperbolic extrapolation, reveals that $\vth_{min} \approx 0.91981$. 

An alternative approach to calculate the minimum is by approximating the interaction function by its cosine series. That way we produce its graph and calculate the point at which the minimum is attained.
The approximation implemented, used the first 3000 terms of the cosine series. To find the minimum a brute force algorithm, was used which sampled 10.000 points in the interval $[0,\pi]$. This numerical calculation reveals  that the minimum is attained approximately at $\vth_{min} \approx 0.91932 \approx \frac{12\pi}{41}$ with $\vphi(\vth_{min}) \approx -1.14963$:

\begin{figure}[htbp]
\centering
\includegraphics[width=0.8\textwidth]{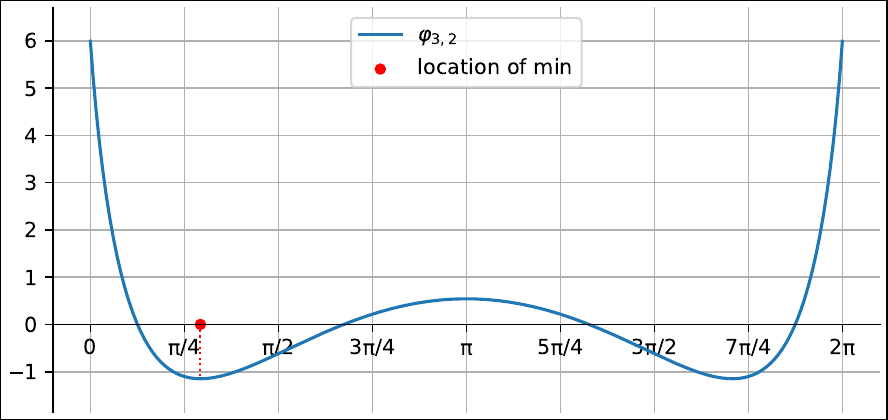} 
\caption{Interaction function $\vphi_{3,2}$ and its minimum}
\label{fig:plot}
\end{figure}

Similar calculations suggest this phenomenon remains true for all $N$ and $\alpha > 2(N-1)$.

\subsection{Optimal placement for 3 points}

\vspace{2mm}

Similar to what was previously done, we prove that the optimal placement of three points is not the equidistribution when $N=2$ and $\alpha = 3$. For that we consider a special, symmetric setup, where $\vth_0 = 0, \vth_1 := \vth \in (0,\pi)$ and $\vth_2 = - \vth \in (-\pi,0)$. With this setup, (5) becomes:

\begin{gather*}
   \Phi(\vth) := 2(2\vphi(\vth)+ \vphi(2\vth)).
\end{gather*}

We will prove that $\Phi(\pi/3) < \Phi(2\pi/3)$. Using the above expression and the fact that $\vphi(2 \pi - \vth) = \vphi(\vth)  $ we have that the above inequality is equivalent to $\vphi(\pi/3)< \vphi(2\pi/3)$. We evaluate: 

\begin{gather*}
    \vphi(\pi/3) = -12 +2\sqrt{3} \pi \approx -1,117,
\end{gather*}

\begin{gather*}
    \vphi(2\pi/3) = 4 \int_0^1 \frac{r-1/2}{1-r+r^2} (3 - 12r + 9r^2)\,dr = -48 + 7\pi\sqrt{3} + 9\log(3) \approx -0.022.
\end{gather*}

and thus:

\begin{gather*}
  \vphi(2\pi/3)   - \vphi(\pi/3) = -36 + 5\pi\sqrt{3} +9\log3 \approx 1.094 > 0.
\end{gather*}

As a result we recover that the optimal placement cannot be when the points are equidistributed, which corresponds to $\vth = \frac{2\pi}{3}$. We may also verify that $\Phi(\pi/6) < \Phi(\pi/3)$. Indeed,

\[
\Phi(\pi/6) < \Phi(\pi/3) \iff 2 \vphi(\pi/6) < \vphi(\pi/3) + \vphi(2\pi/3) .
\]

We calculate:

\begin{align*}
    \vphi(\pi/6) = 4 \int_0^1 \frac{r-\sqrt{3}/2}{1-\sqrt{3}r+r^2} (3 - 12r + 9r^2)\,dr =   \\
    -48 + 18(1 + \sqrt{3}) + (15-12\sqrt{3})\log(2-\sqrt{3})-\frac{5\pi}{2}(3\sqrt{3}-4) \approx -0.6.
\end{align*}

Combining this with the previous calculations shows the desired inequality. This will be useful in section 6.
\vspace{2mm}

\section{Equidistribution arising through additional conditions}

As the previous examples demonstrate, we cannot expect to achieve the optimal distribution by equidistributing the points on the circle. We can do so only if we impose some extra condition on the moments of the points. 

\vspace{2mm}

\textit{Proof of Theorem 1.4}

The argument for the proof follows closely the argument given in \cite{Abakumov2020}. We consider the minimization problem:

\begin{gather*}
    \min_{\vth_0 < \vth_1 < \cdots < \vth_{n-1} \in [0,2\pi)}\sum_{k \neq j}  \vphi_{\alpha_{\ast},N}(\vth_j - \vth_k),
\end{gather*}
which is equivalent to the one stated in Theorem 1.4, according to subsection 3.2. By (4) and (6), we see that the interaction function for $\alpha = \alpha_{\ast}$ is given by:

\begin{gather*}
    \vphi_{\alpha_{\ast},N}(\vth) = \frac{(2N)!}{((N-1)!)^2} \sum_{m=N}^{\infty} \frac{((m-1)!)^2}{(m-N)!(m+N)!} \cos(m \vth).
\end{gather*}

We calculate the second derivative of this series. Here we use that 

\begin{gather*}
\sum_{m = 1}^{\infty}\frac{\sin(m\vth)}{m} = \frac{\pi - \vth }{2} , \quad 0 < \vth < 2\pi.
\end{gather*}

It is known (\cite{Bary2014}, Chapter 1, Section 30) that the partial sums of this series are uniformly bounded. By termwise integration we obtain that:

\begin{gather*}
\sum_{m=1}^{\infty} \frac{1- \cos(m \vth)}{m^2} = \frac{\pi \vth}{2} - \frac{\vth^2}{2},
\end{gather*}
and hence:

\begin{gather*}
\sum_{m=1}^{\infty}\frac{\cos(m \vth)}{m^2} = \frac{\pi^2}{6} - \frac{\pi \vth}{2} + \frac{\vth^2}{2}.
\end{gather*}

Since $ \frac{((m-1)!)^2}{(m-N)!(m+N)!} = \frac{1}{m^2} + O(\frac{1}{m^3})$ as $ m \ra \infty$ we can differentiate the series 

\begin{gather*}
\sum_{m = N}^{\infty}\bigg( \frac{((m-1)!)^2}{(m-N)!(m+N)!} - \frac{1}{m^2} \bigg)\cos(m \vth),
\end{gather*}

and obtain:

\begin{align*}
\frac{((N-1)!)^2}{(2N)!} \cdot  \vphi'_{\alpha_{\ast},N}(\vth) 
&=- \frac{\pi - \vth}{2} + \sum_{m=1}^{N-1}\frac{\sin(m\vth)}{m} +   \sum_{m=N}^{\infty} \bigg( \frac{1}{m} - \frac{m((m-1)!)^2}{(m-N)!(m+N)!}\bigg) \sin(m\vth) .
\end{align*}

Now set:

\begin{gather*}
    a_m = 
\begin{cases}
    1 & m = 0,1,\ldots,N-1, \\
    1- \frac{(m!)^2}{(m-N)!(m+N)!} & m \geq N.
\end{cases}
\end{gather*}

To differentiate the series $\sum_{m = N}^{\infty}\frac{a_m}{m} \sin(m \vth)$, consider an interval $[\eps, 2\pi - \eps]$, $\eps >0$, and the series $\sum_{m = N}^{\infty}a_m \cos(m \vth)$. It can be verified that $a_m\frac{1-r^m}{1-r}$ is eventually decreasing, and has limit zero. Moreover the partial sums \{ $\sum_{m=N}^{M}\cos(m \vth)\}_{M \geq N}$ are uniformly bounded on the interval $[\eps, 2\pi - \eps]$. Therefore, by Dirichlet's summation formula we conclude that for $0 \leq r < 1$: 

\begin{gather*}
    \bigg| \sum_{m=N}^{\infty}a_m (1-r^m)\cos(m \vth) \bigg| \leq C_{\eps} (1-r), \,\, \vth \in [\eps, 2\pi - \eps].
\end{gather*}

As a result, for every $[x,y] \sbs [\eps,2\pi - \eps]$ we have that:

\begin{gather*}
    \bigg| \int_x^y \sum_{m=N}^{\infty}a_m\cos(m \vth) \, d\vth - \int_x^y \sum_{m=N}^{\infty}a_m r^m \cos(m \vth) \, d\vth\bigg| \leq 2\pi C_{\eps} (1-r).
\end{gather*}

Since 

\begin{gather*}
\lim_{r \ra 1} \int_x^y  \sum_{m=N}^{\infty}a_m r^m \cos(m \vth) \, d\vth = \sum_{m=N}^{\infty}\frac{a_m}{m} \sin(m x) -  \sum_{m=N}^{\infty}\frac{a_m}{m} \sin(m y),
\end{gather*}
we conclude, by the fundamental theorem of calculus, that:

\begin{gather*}
\frac{d}{d\vth} \bigg( \sum_{m=N}^{\infty}\frac{a_m}{m} \sin(m \vth)\bigg) = \sum_{m = N}^{\infty} a_m \cos(m \vth).
\end{gather*}

Overall,

\begin{gather*}
   \frac{((N-1)!)^2}{(2N)!} \cdot  \vphi_{\alpha_{\ast},N} ''(\vth) = \frac{1}{2} + \sum_{m=1}^{N-1} \cos(m \vth) + \sum_{m=N}^{\infty} \bigg( 1- \frac{(m!)^2}{(m-N)!(m+N)!}\bigg)\cos(m\vth).
\end{gather*}

At this point we would like to make use of Bari's theorem (Theorem 4.1), but unfortunately the coefficients $\{ a_m \}_{m \geq 0}$ are not convex. Instead it can be easily verified that they satisfy the conditions of Proposition 4.3. Moreover it can be verified that for $m \geq N$, $(m+1)a_m - m a_{m+1} \leq 1 = a_0$ if and only if $m \geq N^2-1$ with equality occurring for $m=N^2-1$. This means that we may decrease the coefficients $a_1,a_2,\ldots, a_{N^2-2}$ of the series to obtain a positive cosine series, according to Bari's theorem. Let $\tilde{a}_m$ represent the modified coefficients. Then the function:

\begin{gather*}
\psi(\vth) : = \frac{((N-1)!)^2}{(2N)!} \cdot  \vphi_{\alpha_{\ast},N} ''(\vth) - \sum_{m=1}^{N^2-2}(a_m - \tilde{a}_m)\cos(m\vth),
\end{gather*}

is positive, and hence $\vphi_{\alpha_{\ast},N}''$ can be written as:

\begin{gather*}
    \vphi_{\alpha_{\ast},N}''(\vth) =  \frac{(2N)!}{((N-1)!)^2}\bigg( \psi(\vth) + \sum_{m=1}^{N^2-2}(a_m - \tilde{a}_m)\cos(m\vth) \bigg).
\end{gather*}

We note here that the function $\psi(\vth) $ might have some zeros, but they are countable as mentioned in Remark 2.2. As a result we have:

\begin{gather}
    \vphi_{\alpha_{\ast},N}(\vth) = \frac{(2N)!}{((N-1)!)^2}\bigg( \tilde{\vphi}(\vth) - \sum_{m=1}^{N^2-2}\frac{(a_m - \tilde{a}_m)}{m^2}\cos(m\vth) \bigg).
\end{gather}
where $\tilde{\vphi}$ satisfies $\tilde{\vphi}'' = \psi$ and is strictly convex. We may now proceed to prove the equivalence stated in the theorem. On one hand, we always have:

\begin{gather*}
         \min_{(\vth_1,\vth_2,\ldots,\vth_{n-1}) \in W}\sum_{k \neq j}  \vphi_{\alpha_{\ast},N}(\vth_j - \vth_k) \leq \sum_{k \neq j} \vphi_{\alpha_{\ast},N} \bigg( \frac{2j\pi}{n} - \frac{2k\pi}{n}\bigg).
\end{gather*}

Consider now $\vth_0 < \vth_1 < \cdots < \vth_{n-1} \in [0,2\pi)$ such that $\sum_{k=0}^{n-1}e^{im\vth_k} = 0 $ for all $m=1,2,\ldots,N^2-2$. By equality (14) we may write:

\begin{gather*}
    \sum_{j \neq k} \vphi_{\alpha_{\ast},N} (\vth_j - \vth_k) = \frac{(2N)!}{((N-1)!)^2}\bigg(\sum_{j \neq k} \tilde{\vphi}(\vth_j - \vth_k)  - \sum_{m=1}^{N^2-2}\frac{(a_m - \tilde{a}_m)}{m^2}\sum_{j \neq k}\cos(m(\vth_j-\vth_k))\bigg).
\end{gather*}

 A simple calculation shows that:

\begin{gather}
   \sum_{j \neq k} \cos(m(\vth_j - \vth_k)) = \bigg| \sum_{k=0}^{n-1}e^{im\vth_k} \bigg|^2 - n = \sum_{j \neq k} \cos \bigg(\frac{2j\pi}{m} - \frac{2k\pi}{m} \bigg), \quad 1 \leq m \leq N^2-2.
\end{gather}

Therefore:

\begin{gather*}
    \sum_{j \neq k} \vphi_{\alpha_{\ast},N}(\vth_j - \vth_k) = \\
\frac{(2N)!}{((N-1)!)^2}\bigg(\sum_{j \neq k}\tilde{\vphi}(\vth_j - \vth_k)- \sum_{m=1}^{N^2-2}\frac{(a_m - \tilde{a}_m)}{m^2}\sum_{j \neq k}\cos\bigg(\frac{2j\pi}{n} - \frac{2k\pi}{n} \bigg)\bigg).
\end{gather*}

At this point we will apply the following result from \cite{Abakumov2020}.

\begin{manuallemma}
\textit{Let $\vphi$ be a $2\pi$-periodic even continuous function strictly convex on $(0,2\pi)$. Then for any $n \geq 2$ we have}

\begin{gather*}
\inf_{\vth_j \in [0,2\pi), 0 \leq j < n} \sum_{0 \leq j,k <n, j \neq k} \vphi (\vth_j - \vth_k) = \sum_{0 \leq j,k <n, j \neq k}\vphi\bigg( \frac{2\pi j}{n} -\frac{2\pi k}{n} \bigg).
\end{gather*}
\textit{Furthermore, if $\vth_j \in [0,2\pi), 0 \leq j < n$, and}

\begin{gather*}
\sum_{0 \leq j,k <n, j \neq k} \vphi (\vth_j - \vth_k) = \sum_{0 \leq j,k <n, j \neq k}\vphi\bigg( \frac{2\pi j}{n} -\frac{2\pi k}{n} \bigg),
\end{gather*}
\textit{then the points $e^{i \vth_j}$ are equispaced on the unit circle.}

\end{manuallemma}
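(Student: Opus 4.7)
The plan is to reduce the minimization to a strictly convex problem on the ``gap simplex'' and conclude by symmetry. First, $G(\vth) := \sum_{j \ne k} \vphi(\vth_j - \vth_k)$ is continuous on the compact torus $\T^n$, so the infimum is attained. Using evenness, rewrite $G = 2\sum_{j<k} \vphi(\vth_k - \vth_j)$ after ordering; by rotation-invariance fix $\vth_0 \in [0, 2\pi)$ with $\vth_0 \leq \vth_1 \leq \cdots \leq \vth_{n-1}$. I would then pass to the gap coordinates $\delta_i = \vth_{i+1} - \vth_i$ ($0 \leq i \leq n-2$) and $\delta_{n-1} = 2\pi + \vth_0 - \vth_{n-1}$, which live in the open simplex $\Delta = \{\delta \in \R_{>0}^n : \sum \delta_i = 2\pi\}$. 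Each pairwise distance becomes a linear functional $L_{jk}(\delta) = \delta_j + \cdots + \delta_{k-1}$ taking values in $(0, 2\pi)$, and $G(\delta) = 2\sum_{0 \leq j < k \leq n-1} \vphi(L_{jk}(\delta))$.

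\emph{Step A: the minimizer has distinct points.} I would rule out boundary minima. Suppose $\vth_0^* = \vth_1^*$ at a minimum; place both at the origin and apply the symmetric perturbation $\vth_0 \to -\eps/2$, $\vth_1 \to \eps/2$, other points fixed. The coincident pair contributes $2(\vphi(\eps) - \vphi(0))$, and each outer pair $k \geq 2$ contributes $2[\vphi(\vth_k + \eps/2) + \vphi(\vth_k - \eps/2) - 2\vphi(\vth_k)] \geq 0$ by convexity of $\vphi$ at $\vth_k$. Strict convexity on $(0, 2\pi)$ together with $\vphi(x) = \vphi(2\pi - x)$ forces the one-sided derivative $\vphi_+'(0) < 0$ strictly (because strict monotonicity of $\vphi_+'$ on $(0, 2\pi)$ combined with $\vphi_+'(0^+) = -\vphi_-'(2\pi^-)$ yields $\vphi_+'(0^+) < -\vphi_+'(0^+)$). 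Hence $\vphi(\eps) - \vphi(0) \leq \vphi_-'(\eps)\,\eps$ is strictly negative of linear order in $\eps$, dominating the $O(\eps^2)$ positive corrections from the outer pairs, producing a net energy decrease and contradicting minimality.

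\emph{Step B: strict convexity and symmetry.} On $\Delta$ each $\vphi \circ L_{jk}$ is convex as a composition of convex and linear. The family $\{L_{jk}\}$ separates directions in the tangent space $T\Delta = \{v \in \R^n : \sum v_i = 0\}$: if $v$ is a nonzero tangent then some $v_i \neq 0$ and the length-one functional $L_{i,i+1}(v) = v_i$ picks it up, so $G|_\Delta$ is strictly convex. Moreover $G$ is invariant under the cyclic group $\Z_n$ acting on the gap coordinates (which corresponds to rotating the original configuration by one gap). A strictly convex function on a convex set that is invariant under a group action attains its unique minimum at a fixed point of the action, and the only $\Z_n$-fixed point in $\Delta$ is the barycenter $(2\pi/n, \ldots, 2\pi/n)$, which corresponds to the equispaced configuration. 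Strict convexity upgrades the inequality to a strict one at every non-equispaced configuration, yielding the ``furthermore'' clause.

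\emph{Main obstacle.} The delicate step is Step A. The $O(\eps^2)$ bound on the positive corrections uses smoothness of $\vphi$ at each $\vth_k$; at possible corner points of $\vphi$ (countably many under strict convexity) the second-differences drop to linear order, and one must compare $|\vphi_+'(0)|$ against the configuration-dependent sum of corner-jumps. This technicality is handled cleanly by a smoothing argument: prove the lemma first for a smooth strictly convex approximation $\vphi_\eta$ of $\vphi$ (for instance by mollification plus a small regularizer $\eta(x-\pi)^2$), and then let $\eta \to 0$, using uniform convergence of $\vphi_\eta$ to $\vphi$ to pass to the limit in both the inequality and the equality case.
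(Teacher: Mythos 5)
First, a point of reference: the paper does not prove this statement at all --- it is quoted verbatim as Lemma 13 of \cite{Abakumov2020} and used as a black box, so your argument can only be compared with the proof in that reference. There the proof is a short Jensen-type computation: order the points, set $\vth_{j+n}=\vth_j+2\pi$, and group the pairs by ``gap order'', $\sum_{j\ne k}\vphi(\vth_j-\vth_k)=\sum_{m=1}^{n-1}\sum_{j=0}^{n-1}\vphi(\vth_{j+m}-\vth_j)$; for each $m$ the $n$ arguments lie in $[0,2\pi]$ and sum to $2\pi m$, so convexity gives $\sum_j\vphi(\vth_{j+m}-\vth_j)\ge n\,\vphi(2\pi m/n)$, with equality (by strictness) only if all order-$m$ gaps coincide, and $m=1$ forces equispacing. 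Your route --- strict convexity of the total energy on the gap simplex combined with the $\Z_n$-symmetry --- is genuinely different and is sound in its core: Step~B is correct as written (the consecutive-gap functionals $L_{i,i+1}$ do separate tangent directions of the simplex, and the barycenter is the unique fixed point of the cyclic shift, which is indeed a symmetry of $G$ because $\vphi(x)=\vphi(2\pi-x)$).

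The genuine gap is Step~A, and your proposed repair does not fully close it. As you note, the positive corrections from the outer pairs are only $O(\eps^2)$ where $\vphi$ is differentiable; at a corner of $\vphi$ the symmetric second difference is of order $\eps$ times the jump of $\vphi'$ there, and since the total variation of $\vphi'$ on $(0,2\pi)$ equals $-2\vphi_+'(0^+)$, the sum of these linear corrections can a priori be comparable in size to the linear gain $2(\vphi(\eps)-\vphi(0))\approx 2\vphi_+'(0^+)\eps$ obtained by splitting the coincident pair, so the sign of the net change is not controlled. Mollification does salvage the \emph{inequality} (pass to the limit in the inequality for $\vphi_\eta$), but not the ``furthermore'' clause: the strict inequalities valid for each $\vphi_\eta$ may degenerate as $\eta\to0$, so uniqueness of the minimizer for $\vphi$ does not follow by this limiting argument. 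Fortunately Step~A is unnecessary. Since $\vphi$ is continuous, strict convexity on the open interval extends to strict convexity on $[0,2\pi]$ (compare the chord over $[a,b]$ with a chord over a slightly smaller subinterval contained in $(0,2\pi)$); each $L_{jk}$ maps the \emph{closed} gap simplex into $[0,2\pi]$, so $G$ is strictly convex on the closed simplex, which is compact. Existence and uniqueness of the minimizer, its identification with the barycenter via the symmetry, and the strict inequality at every non-equispaced configuration --- including degenerate ones with coincident points --- then all follow at once, with no perturbation or smoothing required.
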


Since the function $\tilde{\vphi}$ is strictly convex, we obtain:

\begin{gather*}
\frac{(2N)!}{((N-1)!)^2} \sum_{j \neq k}\tilde{\vphi}(\vth_j - \vth_k) \geq \frac{(2N)!}{((N-1)!)^2}\sum_{j \neq k}\tilde{\vphi}\bigg(\frac{2j\pi}{n} - \frac{2k\pi}{n} \bigg),
\end{gather*}

with equality holding if and only if the points $\vth_k$ are equidistributed. So,

\begin{gather*}
\sum_{j \neq k} \vphi_{\alpha_{\ast},N}(\vth_j - \vth_k) \geq \\ 
\frac{(2N)!}{((N-1)!)^2}\bigg( \sum_{j \neq k}\tilde{\vphi}\bigg(\frac{2j\pi}{n} - \frac{2k\pi}{n} \bigg)- \sum_{m=1}^{N^2-2}\frac{(a_m - \tilde{a}_m)}{m^2}\sum_{j \neq k}\cos\bigg(\frac{2j\pi}{n} - \frac{2k\pi}{n} \bigg) \bigg) \\
= \sum_{j \neq k}\vphi_{\alpha_{\ast},N}\bigg(\frac{2j\pi}{n} - \frac{2k\pi}{n} \bigg).
\end{gather*}

This concludes the proof of the theorem. $\hfill \blacksquare$

\begin{remark}
This technique does not work for $\alpha \neq \alpha_{\ast}$. When $\alpha> \alpha_{\ast}$ the coefficients $a_m$ do not converge to zero. When $\alpha< \alpha_{\ast}$ the coefficients $a_m$ are not always positive and tend to $-\infty$. 
\end{remark}

\begin{remark}
The hypothesis that the first $N^2-2$ moments of the points $\vth_0, \vth_1, \ldots, \vth_{n-1}$ be zero seems quite restrictive, but the calculations demonstrate that for a low number of points some restrictions are necessary. Indeed, as was calculated in section 5, for two points the equidistribution is not minimal. Moreover, the configuration $\vth_0 = 0, \vth_1 = \pi/3$ is of lower energy than the equidistribution and none of the first two moments is zero.

Similarly, for three points, equidistribution is seen to be non-optimal, and the configuration $\vth_0 = 0, \vth_1 = \pi/6, \vth_2 = -\pi/6$ is of lower energy than equidistribution and none of its first two moments is zero.
\end{remark} 

\section{Asymptotics for the minimum}

In this section we prove a sharp asymptotic estimate for the norm of the equidistributed configuration. We are able to calculate the exact value for the critical exponent. We then proceed to prove the main theorem of this section which states that asymptotically, equidistribution is optimal, up to a constant.  

\vspace{2mm}

\it{Proof of Theorem 1.5:}

Consider the equidistribution function, $\Psi_n^N(z) = \sum_{k=0}^{n-1} \frac{1}{(z-e^{2  k\pi i /n })^N}$. Consider also the polynomial $p(z) = z^n -1$. $\Psi_n^N$ may be rewritten in terms of $p$ as follows:

\begin{gather*}
    \Psi_n^N(z) = \frac{(-1)^N}{(N-1)!} \bigg( \frac{p'(z)}{p(z)}\bigg)^{(N-1)}.
\end{gather*}

As a consequence we obtain the following power series representation:

\begin{gather*}
    \Psi_n^N(z) = \frac{(-1)^{N+1}}{(N-1)!} \sum_{k=0}^{\infty} n (n(k+1)-1)_{N-1}z^{n(k+1)-N}.
\end{gather*}

Let $c_{n,k}^N = \frac{(-1)^{N+1}}{(N-1)!}  n (n(k+1)-1)_{N-1}$. For $\alpha > 2(N-1)$ we have:

\begin{gather*}
    \Vert \Psi_n^N \Vert_{\alpha}^2 = \int_{\D}|\Psi_n^N(z)|^2 \, dA_{\alpha}(z) = \sum_{k=0}^{\infty} |c_{n,k}^N|^2 \int_{\D}|z|^{2n(k+1) - 2N} \,dA_{\alpha}(z) .
\end{gather*}

A simple calculation show that the values of the integrals of these monomials are equal to $\frac{\Gamma(n(k+1)-N+1) \Gamma(\alpha+2)}{\Gamma(n(k+1)-N + \alpha + 2 )}$. We have:

\begin{gather*}
    |c_{n,k}^N|^2  = \frac{n^{2N}}{((N-1)!)^2} \prod_{j=1}^{N-1}(k+1 - j/n)^2,
\end{gather*}    

and,

\begin{gather*}
    \frac{\Gamma(n(k+1)-N+1) \Gamma(\alpha+2)}{\Gamma(n(k+1)-N + \alpha + 2 )} = \frac{\Gamma(\alpha + 2)}{n^{\alpha + 1} (k+1 - N/n)^{\alpha + 1}} \cdot  \frac{(n(k+1)-N)^{\alpha + 1} \Gamma(n(k+1)-N+1)}{\Gamma(n(k+1)-N + \alpha + 2)}.
\end{gather*}

Hence the norm of $\Psi_n^N$ can we rewritten as the series:

\begin{gather*}
     \Vert \Psi_n^N \Vert_{\alpha}^2 =  \\ 
     n^{2N-\alpha -1}\cdot \frac{\Gamma(\alpha+2)}{((N-1)!)^2} \sum_{k=0}^{\infty}\bigg( \frac{(n(k+1)-N)^{\alpha + 1} \Gamma(n(k+1)-N+1)}{(k+1-N/n)^{\alpha+1}\Gamma(n(k+1)-N + \alpha + 2)}\prod_{j=1}^{N-1}(k+1 - j/n)^2  \bigg) .
\end{gather*}

By standard properties of the Gamma function we have that:

\begin{gather*}
    \frac{(n(k+1)-N)^{\alpha + 1} \Gamma(n(k+1)-N+1)}{\Gamma(n(k+1)-N + \alpha + 2)} \ra 1,
\end{gather*}
as $n \ra \infty$. Hence the coefficients of the above series can be easily seen to converge to $(k+1)^{2(N-1)- \alpha -1}$ as $n \ra \infty$. Since $\alpha > 2(N-1)$ the series $\sum_{k=0}^{\infty} (k+1)^{2(N-1)- \alpha -1}$ converges, and we may conclude that:

\begin{gather*}
  \lim_{n \ra \infty} n^{\alpha + 1-2N}  \Vert \Psi_n^N \Vert_{\alpha}^2 = \frac{\Gamma(\alpha +2) \zeta(\alpha + 1 - 2(N-1))}{((N-1)!)^2}.
\end{gather*}

$\hfill \blacksquare$

\vspace{2mm}

\it{Proof of Theorem 1.6}

One direction of the inequality is obvious. To prove the other direction it is sufficient to prove that for fixed N, there exists a constant $C> 0$, independent of $n \in \N$ such that for any $a_k \in \T$, $0 \leq k \leq N-1$ we have:

\begin{gather*}
 \bigg \Vert \sum_{k=0}^{n-1} \frac{1}{(z-a_k)^N} \bigg \Vert_{\alpha}^2 \geq  \frac{C}{n^{\alpha+1 -2N}}.
\end{gather*}

To that end,  we write:

\begin{gather*}
    \bigg \Vert \sum_{k=0}^{n-1} \frac{1}{(z-a_k)^N} \bigg \Vert_{\alpha}^2 = \int_{\D} \bigg|\sum_{k=0}^{n-1} \frac{1}{(z-a_k)^N}  \bigg|^2 \, dA_{\alpha}(z) \\
    = \frac{1}{((N-1)!)^2} \sum_{s = N-1}^{\infty} ((s)_{N-1})^2 \bigg| \sum_{k=0}^{n-1} a_k^{s+1}\bigg|^2 \no{z^{s-N+1}}_{\alpha}^2 \\
    \geq \frac{1}{((N-1)!)^2} \sum_{s = N-1}^{2nN-1} ((s)_{N-1})^2 \bigg| \sum_{k=0}^{n-1} a_k^{s+1}\bigg|^2 \no{z^{s-N+1}}_{\alpha}^2.
\end{gather*}

It is obvious that $((s)_{N-1})^2 \gtrsim s^{2(N-1)}$, for $s \geq N-1$. Furthermore, $\no{z^{s-N+1}}_{\alpha}^2 = \frac{\Gamma(\alpha+2)\Gamma(s-N+2)}{\Gamma(s-N+ \alpha + 3)} \asymp s^{-\alpha - 1}$, for $s \geq N-1$, by Stirling's formula. Using the fact that $s = O(n)$ we deduce that:

\begin{gather*}
\bigg \Vert \sum_{k=0}^{n-1} \frac{1}{(z-a_k)^N} \bigg \Vert_{\alpha}^2 \gtrsim \sum_{s = N-1}^{2nN-1} \frac{1}{s^{ \alpha +3 -2N}} \bigg| \sum_{k=0}^{n-1} a_k^{s+1}\bigg|^2 \gtrsim \frac{1}{n^{\alpha +3 - 2N}}  \sum_{s = N-1}^{2nN-1} \bigg| \sum_{k=0}^{n-1} a_k^{s+1}\bigg|^2.
\end{gather*}
Therefore it suffices to show that:

\begin{gather*}
    \sum_{s=N-1}^{2nN-1} \bigg| \sum_{k=0}^{n-1} a_k^{s+1}\bigg|^2 \geq C n^2,
\end{gather*}
for some constant $C>0$ and all unimodular numbers $a_k \in \T$ and all $n \in \N$. We have:

\begin{gather*}
    \sum_{s=N-1}^{2nN-1} \bigg| \sum_{k=0}^{n-1} a_k^{s+1}\bigg|^2 = \sum_{s=N}^{2nN} \bigg| \sum_{k=0}^{n-1} a_k^s\bigg|^2 = \sum_{s=1}^{2nN} \bigg| \sum_{k=0}^{n-1} a_k^s\bigg|^2  - \sum_{s=1}^{N-1} \bigg| \sum_{k=0}^{n-1} a_k^s\bigg|^2 \\
    \geq  \sum_{s=1}^{2nN} \bigg| \sum_{k=0}^{n-1} a_k^s\bigg|^2 - n^2 \cdot (N-1).
\end{gather*}

In \cite{Abakumov2020}, section 4, it is proven that for every $M \in \N$ the following inequality holds for all $a_k \in \T$ and all $n \in \N$:

\begin{gather*}
    \sum_{s=1}^{M} \bigg| \sum_{k=0}^{n-1} a_k^s\bigg|^2 \geq \frac{n(M - n + 1)}{2}. 
\end{gather*}

Our inequality follows with $C = 1/2$ by applying the above inequality for $M = 2nN$. $\hfill \blacksquare$ 

\section{Closure of simple partial fractions}

In this final section, the closure of the set $SF^N(\T)$ in $A^2_{\alpha}(\D)$ is studied. We begin by establishing some propositions that will be combined into Theorems 1.7. Given some integer $N \in \N$ we consider $\alpha_{\ast} = \alpha_{\ast}(N) = 2(N-1) + 1 = 2N-1$. 

\begin{proposition}
Let $N \in \N$. Then $SF^N$ is nowhere dense in $A^2_{\alpha}$ when $2(N-1)< \alpha < \alpha_{\ast}$. Additionally, $SF^N$ is not dense in $A^2_{\alpha}$ when $a = \alpha_{\ast}$.
\end{proposition}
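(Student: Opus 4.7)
The strategy is to leverage the sharp lower bound from Theorems 1.5 and 1.6: for every $n \in \N$ and every $a_0, \ldots, a_{n-1} \in \T$,
\begin{gather*}
\bigg\Vert \sum_{k=0}^{n-1} \frac{1}{(z-a_k)^N} \bigg\Vert_\alpha \gtrsim \Vert \Psi_n^N\Vert_\alpha \asymp n^{(2N-\alpha-1)/2}.
\end{gather*}
The two halves of the proposition correspond to the two sign regimes of $(2N-\alpha-1)/2$. When $\alpha = \alpha_\ast$ the exponent vanishes and the right-hand side is a positive constant $c$ independent of $n$; every $h \in SF^N$ then satisfies $\Vert h\Vert_{\alpha_\ast} \geq c > 0$, so $0 \notin \overline{SF^N}$ and $SF^N$ is not dense in $A^2_{\alpha_\ast}$.

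When $2(N-1) < \alpha < \alpha_\ast$ the exponent is strictly positive, so the lower bound diverges with $n$. The plan is to stratify $SF^N = \bigcup_{k \geq 1} S_k$, where $S_k$ is the image of $\T^k$ under the parametrization
\begin{gather*}
(a_0, \ldots, a_{k-1}) \longmapsto \sum_{j=0}^{k-1} \frac{1}{(z-a_j)^N},
\end{gather*}
and first to verify that this parametrization is continuous into $A^2_\alpha$. By rotational invariance $\Vert 1/(z-a)^N\Vert_\alpha$ is constant in $a$, and Lemma 3.2 reduces the convergence $1/(z-b)^N \to 1/(z-a)^N$ as $b \to a$ to continuity of the interaction function $\phi_{\alpha, N}$ at $0$; this latter fact follows from the absolute summability of its cosine coefficients $c_n(r^\alpha, N) \asymp n^{2N-\alpha-3}$, valid precisely when $\alpha > 2(N-1)$. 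Each $S_k$ is thus a continuous image of a compact set and itself compact. The lower bound then yields the key structural fact: for every $M > 0$ there is $K = K(M)$ with $\{h \in SF^N : \Vert h\Vert_\alpha \leq M\} \subseteq \bigcup_{k \leq K} S_k$, a compact subset of $A^2_\alpha$.

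Nowhere density now follows. Any norm-bounded sequence $h_n \in SF^N$ converging to some $g$ lies in one of the sets $\bigcup_{k \leq K} S_k$, and compactness gives $g \in SF^N$; thus $SF^N$ is closed. If $SF^N$ contained a ball $B(f, \varepsilon)$, the structural fact above would force $B(f, \varepsilon) \subseteq \bigcup_{k \leq K'} S_k$ for $K' = K(\Vert f\Vert + \varepsilon)$, placing an open ball of the infinite-dimensional Hilbert space $A^2_\alpha$ inside a compact set, which is impossible. Hence $\overline{SF^N} = SF^N$ has empty interior, i.e.\ $SF^N$ is nowhere dense in $A^2_\alpha$. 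The substantive content lies entirely in the sharp lower bound from Theorems 1.5 and 1.6; afterwards, only a clean compactness argument remains, the main technical item to double-check being continuity of the parametrization at configurations with coincident poles, which is handled coordinate by coordinate via the argument above.
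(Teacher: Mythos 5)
Your proof is correct, and its engine is the same as the paper's: the lower bound $\Vert\sum_k (z-a_k)^{-N}\Vert_{\alpha} \gtrsim \Vert\Psi_n^N\Vert_{\alpha} \asymp n^{(2N-1-\alpha)/2}$ from Theorems 1.5 and 1.6, which gives non-density at $\alpha=\alpha_{\ast}$ by exactly the argument you give, and which for $2(N-1)<\alpha<\alpha_{\ast}$ forces any norm-bounded subset of $SF^N$ to have uniformly bounded degree. Where you differ is in how bounded degree is converted into nowhere density. The paper argues with a sequence $f_n\to f\notin SF^N$: the degrees $N_n$ must be unbounded (else ``a suitable subsequence converges to a partial fraction''), and unbounded degree contradicts boundedness of $\Vert f_n\Vert_{\alpha}$; this establishes that $SF^N$ is closed, but the paper never addresses why $SF^N$ has empty interior, which is also needed for nowhere density. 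Your stratification $SF^N=\bigcup_k S_k$, with each $S_k$ a compact continuous image of $\T^k$, handles both points at once: it makes the subsequence step rigorous (continuity of $a\mapsto (z-a)^{-N}$ into $A^2_{\alpha}$, which you correctly reduce via Lemma 3.2 to continuity of the interaction function at $0$, guaranteed by absolute summability of $c_n(r^{\alpha},N)\asymp n^{2N-\alpha-3}$ precisely when $\alpha>2(N-1)$), and it supplies the empty-interior step, since an open ball of the infinite-dimensional space $A^2_{\alpha}$ cannot lie in the compact set $\bigcup_{k\le K}S_k$. So your write-up is, if anything, more complete than the paper's on the ``nowhere dense'' claim; the substantive input (the norm asymptotics) is identical.
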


\begin{proof}
We first consider the case when $2(N-1) < \alpha < \alpha_{\ast}$, and some function $f \in A^2_{\alpha} \backslash SF^N$. If $f$ belongs to the closure of the partial fractions in $A^2_{\alpha}$, then there is a sequence $f_n \in SF^N$, 

\begin{gather*}
f_n(z) = \sum_{0 \leq k < N_n} \frac{1}{(z-a_{n,k})^N},
\end{gather*}
which tends to $f$ in norm. The degree $N_n$ must tend to infinity, otherwise a suitable subsequence converges to a partial fraction, and by uniqueness of limit, $f$ must be a partial fraction itself. Next, we have $ \no{f_n}_{\alpha} \leq \no{f_n-f}_{\alpha} + \no{f}_{\alpha} < \no{f}_{\alpha} + 1$ for $n$ large enough. By Theorems 1.5 and 1.6 we have:

\begin{gather*}
\no{f_n}_{\alpha}^2 \geq \min_{a_{n,k} \in \T} \bigg \Vert \sum_{0 \leq k < N_n} \frac{1}{(z- a_{n,k})^N} \bigg \Vert_{\alpha}^2 \gtrsim \no{\Psi_{N_n}^N}_{\alpha}^2 \gtrsim N_n^{2(N-1)+1 - \alpha}.
\end{gather*}

Since $\alpha < \alpha_{\ast}$ we obtain that $\no{f_n}_{\alpha}$ tends to infinity, which gives a contradiction. The case $\alpha = \alpha_{\ast}$ is similar. Suppose that $SF^N$ is dense in $A^2_{\alpha_{}\ast}$. Then we would be able to approximate the function $f(z) = 0$. We would thus have 

\begin{gather*}
      \bigg \Vert  \sum_{0 \leq k < N_n} \frac{1}{(z-a_{n,k})^N}\bigg \Vert_{\alpha_{\ast}} \ra 0,
\end{gather*}
for some sequence of partial fractions. Moreover, by Theorem 1.6, we know that:

\begin{gather*}
\bigg \Vert \sum_{0 \leq k < N_n} \frac{1}{(z-a_{n,k})^N} \bigg \Vert_{\alpha_{\ast}}^2 \gtrsim \Vert \Psi_n^N \Vert_{\alpha_{\ast}}.
\end{gather*}

By Theorem 1.5 we have that $ \Vert \Psi_n^N \Vert_{\alpha_{\ast} } \geq C$, for some constant $C>0$ independent of $n$. This gives a contradiction.
\end{proof}

\begin{lemma}
Let $N \in \N$. If $\alpha > \alpha_{\ast}$ then $\overline{SF^N}$ is an additive subgroup of $A^2_{\alpha}$.
\end{lemma}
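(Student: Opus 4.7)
The strategy is to verify the three defining properties of an additive subgroup: (i) $0 \in \overline{SF^N}$, (ii) closure under addition, and (iii) closure under negation. Property (ii) costs nothing: $SF^N$ is itself closed under addition, because summing two simple partial fractions of degree $N$ yields another one with combined poles, and this passes to the closure. Property (i) follows from Theorem 1.5, whose asymptotic gives $\no{\Psi_m^N}_{\alpha}^2 \asymp m^{2N-\alpha-1}$; since we are in the regime $\alpha > \alpha_\ast = 2N-1$, this vanishes as $m \ra \infty$, so $\Psi_m^N \ra 0$ in $A^2_\alpha$.

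The substance of the proof lies in (iii). I will first establish it for $f \in SF^N$ and then bootstrap to the full closure. Given $f = \sum_{j=0}^{n-1} \frac{1}{(z-a_j)^N}$ with $a_j \in \T$, the idea is to exploit that $\Psi_m^N$ is asymptotically null, so I can ``delete'' from the equidistributed configuration the $n$ roots of unity closest to the $a_j$'s and the remainder will converge to $-f$. Concretely, for each large $m$ I choose $n$ distinct indices $k_1^{(m)}, \ldots, k_n^{(m)} \in \{0, \ldots, m-1\}$ with $\omega_m^{k_j^{(m)}} := e^{2\pi i k_j^{(m)}/m} \ra a_j$; this is possible because the $m$-th roots of unity become dense on $\T$, and coincidences among the $a_j$'s can be handled by selecting successively nearest roots. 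Setting
\begin{gather*}
g_m(z) := \sum_{k \notin \{k_1^{(m)}, \ldots, k_n^{(m)}\}} \frac{1}{(z - \omega_m^k)^N} = \Psi_m^N(z) - \sum_{j=1}^{n} \frac{1}{(z - \omega_m^{k_j^{(m)}})^N},
\end{gather*}
I get an element of $SF^N$ with $m-n$ poles on $\T$; the first term tends to $0$ in $A^2_\alpha$ by (i), and each $\frac{1}{(z - \omega_m^{k_j^{(m)}})^N}$ converges to $\frac{1}{(z - a_j)^N}$ in $A^2_\alpha$, so $g_m \ra -f$ in norm. To extend to arbitrary $f \in \overline{SF^N}$, I approximate $f$ by $f_k \in SF^N$; each $-f_k$ lies in $\overline{SF^N}$ by the above, and $-f_k \ra -f$ forces $-f \in \overline{SF^N}$ since the closure is itself closed.

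The one subsidiary fact required, and the only step that is not immediate, is the continuity of $a \mapsto \frac{1}{(z-a)^N}$ from $\T$ into $A^2_\alpha$. By rotation invariance the norm is independent of $a \in \T$, and the real part of the inner product $\inb{\frac{1}{(z-b)^N}}{\frac{1}{(z-a)^N}}$ is a continuous function of $b-a$ via the convergent cosine series of Lemma 3.2; expanding $\no{\frac{1}{(z-b)^N} - \frac{1}{(z-a)^N}}_\alpha^2$ as a sum of norms minus twice this real part then shows convergence to zero as $b \ra a$. I do not anticipate any serious obstacle beyond this routine continuity; the whole argument pivots on the cancellation trick against the vanishing equidistributed configuration, which is decisively enabled by the regime $\alpha > \alpha_\ast$.
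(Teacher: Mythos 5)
Your proposal is correct and follows essentially the same route as the paper: $0\in\overline{SF^N}$ via Theorem 1.5 in the regime $\alpha>\alpha_\ast$, semigroup structure for addition, and negatives obtained by subtracting pole terms converging to the target from the asymptotically null $\Psi_m^N$. The only (harmless) variations are that you delete $n$ poles at once instead of reducing to a single pole via the semigroup property, and you verify the continuity of $a\mapsto\frac{1}{(z-a)^N}$ through the absolutely convergent cosine series of Lemma 3.2 rather than by uniform convergence on compacta plus a uniform tail estimate.
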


\begin{proof}
It is clear that $SF^N$ is an additive semi-group. Since $\alpha > \alpha_{\ast}$, Theorem 1.5 guarantees that $\Vert \Psi_n^N \Vert_{\alpha} \ra 0$, hence $0 \in \overline{SF^N}$. It suffices to show that for a given $a \in \T$, the function $- \frac{1}{(z-a)^N}$ belongs to  $\overline{SF^N}$. Since the roots of unity are dense in $\T$ we can find positive integer sequences $k_m, n_m$ such that $ \exp{\bigg( 2\pi i \frac{k_m}{n_m} \bigg)} \ra a$. Let $\phi_m =  2\pi \frac{k_m}{n_m}$ and consider the functions:

\begin{gather*}
    f_m(z) = \frac{1}{(z-e^{i\phi_m})^N}.
\end{gather*}

The function $f_m(z)$ converges to $\frac{1}{(z-a)^N}$ uniformly on compact sets of $\D$. Next,

\begin{gather*}
    \int_{s < |z| <1 }\bigg| |f_m(z)|^2 (1-|z|^2)^{\alpha} \,dA(z) \lesssim \int_s^1 r^{\alpha - 2N} \,dr,
\end{gather*}
uniformly in $m,s$. Hence $f_m(z) \ra \frac{1}{(z-a)^N}$ in $A^2_{\alpha}$. Now consider the functions $\Psi_{n_m}^N$ and write:

\begin{gather*}
    \bigg\Vert - \frac{1}{(z-a)^N} - \bigg(\Psi_{n_m}^N - f_m \bigg)  \bigg\Vert_{\alpha} \\
      = \bigg\Vert - \bigg( \frac{1}{(z-a)^N}- f_m\bigg) - \Psi_{n_m}^N  \bigg\Vert_{\alpha} \\
       \leq \bigg\Vert  \frac{1}{(z-a)^N}- f_m \bigg\Vert_{\alpha} + \bigg\Vert  \Psi_{n_m}^N  \bigg\Vert_{\alpha},
\end{gather*}
which converges to zero. Since $\Psi_{n_m}^N - f_m \in SF^N$ we obtain $- \frac{1}{(z-a)^N} \in \overline{SF^N}$.
\end{proof}

\begin{proposition}
Let $N \in \N$. Then $SF^N$ is dense in $A^2_{\alpha}$ when $\alpha > \alpha_{\ast}+1$.
\end{proposition}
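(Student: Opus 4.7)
\textit{Proof proposal for Proposition 8.3.} The plan is to exploit Lemma 8.2, which gives that $\mathcal{G} := \overline{SF^N}$ is a closed additive subgroup of $A^2_{\alpha}$ containing $\pm 1/(z-a)^N$ for every $a \in \T$. I would carry out a two-step upgrade: first produce, via a finite-difference limit, all real multiples of the one-higher-degree fractions $ia/(z-a)^{N+1}$ inside $\mathcal{G}$; then show that the real-linear span of these is dense in $A^2_{\alpha}$ by a Fourier uniqueness argument.

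For the first step, fix $a = e^{i\theta} \in \T$ and $\delta > 0$. Since $\mathcal{G}$ is an additive group, for every $M \in \N$,
\begin{gather*}
M \bigg(\frac{1}{(z - a e^{i\delta})^N} - \frac{1}{(z - a)^N}\bigg) = M\delta \cdot \frac{1}{\delta}\int_0^{\delta} \frac{iN a e^{is}}{(z - a e^{is})^{N+1}}\, ds \in \mathcal{G}.
\end{gather*}
The hypothesis $\alpha > \alpha_{\ast} + 1 = 2N$ places $1/(z-w)^{N+1}$ in $A^2_{\alpha}$ for every $w \in \T$ by Proposition 3.3, and makes the map $s \mapsto i a e^{is}/(z - a e^{is})^{N+1}$ norm-continuous into $A^2_{\alpha}$. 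Letting $M \to \infty$ and $\delta \to 0$ with $M\delta \to t \in \R$, and using the closedness of $\mathcal{G}$, we obtain $t \cdot ia/(z-a)^{N+1} \in \mathcal{G}$ for every $t \in \R$ and $a \in \T$ (for $t<0$ one swaps the two fractions or replaces $e^{i\delta}$ by $e^{-i\delta}$). Consequently $\mathcal{G}$ contains the closed $\R$-linear span $V$ of the family $\{ia/(z-a)^{N+1} : a \in \T\}$.

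For the second step, let $f(z) = \sum_{k \geq 0} c_k z^k \in A^2_{\alpha}$ satisfy $\text{Re}\langle f, ia/(z-a)^{N+1}\rangle_{\alpha} = 0$ for every $a \in \T$. Expanding $1/(z-a)^{N+1}$ as a Taylor series in $z$ and using $\bar a = 1/a$ on $\T$, the orthogonality of the monomials yields
\begin{gather*}
\left\langle f, \frac{ia}{(z-a)^{N+1}}\right\rangle_{\alpha} = -i(-1)^{N+1}\sum_{k \geq 0} c_k \binom{N+k}{k} \no{z^k}_{\alpha}^2 \, a^{N+k}, \quad a \in \T.
\end{gather*}
For $\alpha > 2N$, the coefficients satisfy $\binom{N+k}{k}^2 \no{z^k}_{\alpha}^2 = O(k^{2N-\alpha-1})$ and are therefore bounded, so the right-hand side is an $L^2(\T)$-function of $a$; its real part vanishing identically forces every Fourier coefficient, and hence every $c_k$, to vanish. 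Thus $V = A^2_{\alpha}$ and $\mathcal{G} = A^2_{\alpha}$.

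The technical core is the justification of the $A^2_{\alpha}$-norm convergence in the finite-difference limit; the threshold $\alpha > \alpha_{\ast} + 1$ is sharp for this strategy precisely because it is the condition under which $1/(z-a)^{N+1}$ belongs to $A^2_{\alpha}$ and the differentiation trick succeeds.
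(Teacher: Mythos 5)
Your proposal is correct, but it takes a genuinely different route from the paper. The paper invokes Theorem 5 of Borodin (cited as \cite{Borodin2014}): it checks that $A^2_{\alpha}$ is uniformly smooth with modulus $O(\tau^2)$ and that $a \mapsto 1/(z-a)^N$ is Lipschitz from $\T$ into $A^2_{\alpha}$ (which is where $\alpha > \alpha_{\ast}+1 = 2N$ enters, via Proposition 3.3 applied to the derivative $-N/(z-a)^{N+1}$), and concludes that $\overline{SF^N}$ contains the real span of $\{1/(z-a)^N : a \in \T\}$ itself; density of that span is then proved by an analytic-continuation argument for the Cauchy-type transform $H(w) = \int_{\D} \overline{f(z)}(z-w)^{-N}\,dA_{\alpha}(z)$ outside $\overline{\D}$. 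You bypass Borodin's theorem entirely with the elementary observation that a closed additive subgroup containing a norm-$C^1$ curve contains all real multiples of its tangent vectors, which lands you on the real span of the \emph{derivative} family $\{ia/(z-a)^{N+1}\}$ rather than of the original fractions; you then prove density of that span by a direct Fourier computation on $\T$, where the key point — correctly exploited — is that all frequencies $N+k$ are strictly positive, so vanishing of the real part kills every coefficient. The trade-offs: the paper's route is shorter modulo the external theorem and works with the natural spanning set; yours is self-contained and arguably more transparent, and notably the threshold $\alpha > 2N$ enters for the same underlying reason in both (membership of $1/(z-a)^{N+1}$ in $A^2_{\alpha}$). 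The one step you flag but do not carry out — norm-continuity of $w \mapsto 1/(z-w)^{N+1}$ on $\T$ and the validity of the Bochner-integral representation of the increment — is genuinely needed, but it follows from exactly the uniform boundary estimate used in the paper's proof of Lemma 8.2 (with $N+1$ in place of $N$), namely $\int_{s<|z|<1} |z-w|^{-2N-2}(1-|z|^2)^{\alpha}\,dA(z) \lesssim \int_s^1 r^{\alpha-2N}\,dr$ uniformly in $w$, so I would not call it a gap, only an omitted verification.
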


\begin{proof}
We will use Theorem 5 in \cite{Borodin2014} to prove that the set $SF^N$ contains the real vector space spanned by $\bigg\{ \frac{1}{(z-a)^N} \, : \, a \in \T \bigg\}$. By Lemma 8.2, the closure of partial fractions is an additive subgroup of $A^2_{\alpha}$. The Bergman space being a Hilbert space is uniformly smooth with modulus of smoothness $s(\tau) = O(\tau^2)$. We consider the map $$\iota : \T \ra A^2_{\alpha},$$ $$\iota(a) =  \frac{1}{(z-a)^N}.$$ We have that $\frac{d \iota}{d a} (a) = -\frac{N}{(z-a)^{N+1}}$. Since $\alpha > \alpha_{\ast} + 1 = 2N$, by Proposition 3.3 we get that $\no{\frac{d \iota}{d a}}_{\alpha} <  \infty$. This norm is uniformly bounded in $a \in \T$ by radial symmetry, from which we conclude that the map $\iota$ is Lipschitz. The hypotheses of Theorem 5 in \cite{Borodin2014} are satisfied, and hence $SF^N$ contains a real vector space $M$ spanned by $\bigg\{ \frac{1}{(z-a)^N} \, : \, a \in \T \bigg\}$. We will show that $M$ is dense in $A^2_{\alpha}$. Consider some function $f \in A^2_{\alpha}$ that is orthogonal to $M$, i.e:

\begin{gather}
    \int_{\D} \frac{\overline{f(z)}}{(z-a)^N} \,dA_{\alpha}(z) = 0 \, ,\quad \forall a \in \T.
\end{gather}

We consider the function $H: \C \setminus \overline{\D}  \ra \C$:

\begin{gather*}
H(w) = \int_{\D} \frac{\overline{f(z)}}{(z-w)^N} \,dA_{\alpha}(z),
\end{gather*}
which is analytic in $\C \setminus \overline{\D}$ and conntinuous up to $\C \setminus \D$. Since $H \big\vert_{\partial \D} = 0$, by the uniqueness theorem we obtain that $H \big\vert_{\C \setminus \overline{\D}} = 0$. Since 

\begin{gather*}
\lim_{w \ra \infty} \int_{\D} \frac{\overline{f(z)}}{(z-w)^k} \,dA_{\alpha}(z) = 0, \,\, k \geq 1,
\end{gather*}
we obtain that:

\begin{gather*}
\int_{\D} \frac{\overline{f(z)}}{(z-w)^k} \,dA_{\alpha}(z) = 0, \,\, k \geq 1 , \,\, w \in \C \setminus \D.
\end{gather*}
Hence,
\begin{gather*}
     \int_{\D} \overline{f(z)} R(\frac{1}{z-a}) \,dA_{\alpha}(z) = 0, 
\end{gather*}
for all polynomials $R$ with $R(0)=0$ and $a \in \C \backslash \D$. Since these polynomials are dense in $A^2_{\alpha}$, we get that $f = 0$. As a result $M = A^2_{\alpha}$.
\end{proof}

Theorem 1.7 follows as a Corollary of Proposition 8.1 and Proposition 8.3.


\vspace{5mm}

\begin{scriptsize}
\noindent \bf{ACKNOWLEDGEMENTS.} 
\end{scriptsize}
I would like to thank my advisors Evgeny Abakumov and Alexander Borichev for proposing me the problems discussed in this article, and for numerous discussions and suggestions.

%
%

\printbibliography[heading=bibintoc,title={References}]

%
%

\vspace{8mm}

NIKIFOROS BIEHLER: 

UNIV GUSTAVE EIFFEL,

UNIV PARIS EST CRETEIL, 

CNRS, LAMA UMR8050 

F-77447 MARNE-LA-VALÉE, FRANCE
 
 Email address : \bf{nikiforos.biehler@univ-eiffel.fr}

\end{document}